\def\@biblabel#1{#1.}\makeatother
\newtheorem*{rem}{Remark}
\newtheorem{lemma}{Lemma}
\newtheorem{conjecture}{Conjecture}
\newtheorem{theorem}{Theorem}
\newtheorem{corollary}{Corollary}
\def\mydash{\CJKglue\raise0.2ex\hbox{---\kern-0.01em---}\CJKglue}
\def\ZZ{\mathbb{Z}}
\begin{document}
\title{A New Generalization of Fermat's Last Theorem}
\author{Tianxin Cai, Deyi Chen and Yong Zhang}
\date{}
\maketitle
\vspace{-3em}
\begin{abstract}
In this paper, we consider some hybrid Diophantine equations of addition and multiplication. We first improve a result on new Hilbert-Waring problem. Then we consider the equation
\begin{equation}
  \begin{cases}
      A+B=C\\
      ABC=D^n\\
    \end{cases}
\end{equation}
where $A,B,C,D,n \in\ZZ_{+}$ and $n\geq3$, which may be regarded as a generalization of Fermat's equation $x^n+y^n=z^n$. When $\gcd(A,B,C)=1$, $(1)$ is equivalent to Fermat's equation, which means it has no positive integer solutions. We discuss several cases for $\gcd(A,B,C)=p^k$ where $p$ is an odd prime. In particular, for $k=1$ we prove that $(1)$ has no nonzero integer solutions when $n=3$ and  we conjecture that it is also true for any prime $n>3$. Finally, we consider equation $(1)$ in quadratic fields $\mathbb{Q}(\sqrt{t})$ for $n=3$.
\end{abstract}
\footnote[0]{2010 Mathematics Subject Classification. Primary 11D41; Secondary 11D72.

Project supported by the National Natural Science Foundation of China 11351002.}

\bigskip
\textbf{1. Introduction}
\smallskip

 In this paper, we consider some hybrid Diophantine equations of addition and multiplication. First of all,
 \begin{equation*}
   n=x_1+x_2+\cdots+x_s
 \end{equation*}
 such that
 \begin{equation*}
   x_1x_2\cdots x_s=x^k,
 \end{equation*}
 for $n,x_{i},x,k \in \ZZ_{+}$, which is a new variant of Waring's problem:
\begin{equation*}
  n=x_1^k+x_2^k+\cdots+x_s^k.
\end{equation*}
 We denote by $g'(k)$ (resp. $G'(k)$) the least positive integer such that every integer (resp. all sufficiently large integer) can be represented as a sum of at most $g'(k)$ (resp. $G'(k)$) positive integers, and the product of the $g'(k)$ (resp. $G'(k)$) integers is a $k$-th power. We show \cite{1} that
 \begin{equation*}
   g'(k)=2k-1;~~G'(p)\leq p+1;~~G'(2p)\leq2p+2~~(p\geq3);~~G'(4p)\leq4p+2~~(p\geq7);
 \end{equation*}
where $k$ is a positive integer and $p$ is prime. In this paper, we improve the results on composite numbers as follow.
\begin{theorem}
  For any composite number  $k$, $G'(k)\leq k+2$.
\end{theorem}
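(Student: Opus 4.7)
Since $k$ is composite, write $k=ab$ with $2\le a\le b$. For each sufficiently large $n$ I would exhibit a representation with $k+2$ summands built on the skeleton
\[
n \;=\; \underbrace{u+u+\cdots+u}_{k\text{ copies}} \;+\; v \;+\; w,
\]
so that the product equals $u^{k}\,vw$. The factor $u^{k}$ is already a $k$-th power, hence the condition reduces to: \emph{find positive integers $u,v,w$ with $ku+v+w=n$ and $vw$ a $k$-th power.}

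The pairs $(v,w)$ with $vw$ a $k$-th power admit the parametrization $v=T S^{k}$, $w=U S'^{k}$, where $T$ is $k$-th-power-free and $U$ is the unique $k$-th-power-free integer with $v_p(U)=k-v_p(T)$ for every prime $p\mid T$ (so $TU$ is automatically a $k$-th power). In this form
\[
v+w \;=\; T\,S^{k} \;+\; U\,S'^{k},
\]
and we must choose $(T,S,S')$ so that (i) $T S^{k}+U S'^{k}\equiv n\pmod{k}$, and (ii) $u=(n-TS^{k}-US'^{k})/k$ is positive. Condition (ii) is automatic once a residue-matching $(T,S,S')$ is fixed and $n$ is large enough, so everything hinges on (i).

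The key lemma I would need is that, as $(T,S,S')$ vary over admissible triples, $T S^{k}+U S'^{k}$ attains every residue class modulo $k$. For $T=U=1$ the attainable residues are $S^{k}+S'^{k}\pmod{k}$, which by Euler's theorem depend only on $(S\bmod k, S'\bmod k)$ and in general do not cover $\ZZ/k\ZZ$ when $k$ is composite (for instance $k=9$ gives $S^{k}\equiv S^{3}\pmod 9 \in\{0,\pm1\}$, covering only $\{0,\pm1,\pm2\}$). The composite factorization $k=ab$ is exploited precisely here: by allowing $T$ to vary over $k$-th-power-free multiples of the prime factors of $k$, and using Chinese remainder decomposition modulo the prime powers $p^{e}\|k$, I can hit the residues missed by the $T=1$ family.

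\textbf{Main obstacle.} The decisive technical step is the residue-coverage claim above. I would handle it prime by prime: for each $p\mid k$, I choose $T$ with a carefully prescribed $p$-adic valuation, and exploit that $US'^{k}$ contributes a controllable residue modulo $p^{e}$ while $TS^{k}$ supplies the remaining degree of freedom. CRT then glues these local solutions into a global $(T,S,S')$. Once this is done, a finite check of the finitely many chosen triples gives an effective threshold $N_{0}(k)$ beyond which the construction yields $u\ge 1$, proving $G'(k)\le k+2$.
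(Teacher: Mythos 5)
Your skeleton coincides with the paper's: $k$ equal summands $u$ plus two extra summands whose product must be a $k$-th power and whose sum must match $n$ modulo $k$. But your argument stops exactly where the content of the theorem lies. The ``residue-coverage lemma'' --- that every residue class modulo $k$ is of the form $v+w$ with $vw$ a $k$-th power --- \emph{is} the theorem (you are right that positivity of $u$ is then automatic for large $n$), and you only sketch a prime-by-prime CRT strategy for it without carrying out any case. The sketch itself is delicate: prescribing the $p$-adic valuation of $T$ forces the complementary valuation on $U$, and you never verify that the two coupled constraints can be met simultaneously modulo every prime power dividing $k$. Your correct observation that $S^k+S'^k$ fails to cover all residues for composite $k$ (e.g.\ $k=9$) shows you located the difficulty, but as written this is a plan, not a proof.

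The gap closes with one explicit choice, which is what the paper does. Write $n=km+r$ with $0\le r\le k-1$. If $r\ge 1$, take the two extra summands to be $w=r$ and $v=k^k r^{k-1}$: then $vw=(kr)^k$ is a $k$-th power, and $v+w\equiv r\equiv n \pmod k$ holds automatically because $k\mid v$ --- no coverage lemma, no CRT. The $k$ equal summands are then $u=m-k^{k-1}r^{k-1}$, positive once $n>k^{2k-1}+k$, and the total product is $\left((m-k^{k-1}r^{k-1})kr\right)^k$. If $r=0$, take $v=w=k^{2k}$, so $vw=(k^4)^k$ and $u=m-2k^{2k-1}$, positive once $n>2k^{2k}$. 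Note also that this construction never uses compositeness of $k$; that hypothesis appears in the statement only because the sharper bound $G'(p)\le p+1$ was already known for primes. Your proposal claims the factorization $k=ab$ is ``exploited precisely'' in the coverage step, but in fact your CRT sketch never uses $a$ and $b$ either --- a sign that the factorization was not the right lever to pull.
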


Next, we consider Fermat's Last Theorem. In 1637, Fermat claimed that the Diophantine equation
\begin{equation*}
  x^n+y^n=z^n
\end{equation*}
has no positive integer solutions for any integer $n\geq3$. This was proved finally by Andrew Wiles in 1995 \cite{15,14}.

There are several generalizations of Fermat's Last Theorem, e.g., Fermat-Catalan conjecture, which states that the equation $a^m+b^n=c^k$ has only
finitely many solutions $(a,b,c,m,n,k)$ , where $a,b,c$ are positive coprime integers and $m,n,k$ are positive integers,
satisfying $\frac{1}{m}+\frac{1}{n}+\frac{1}{k}<1$. So far there are only 10 solutions found \cite{4,10}. Meanwhile,  Beal's conjecture \cite{7} states that the equation $A^x+B^y=C^z$ has no solution in positive integers $A,B,C,x,y$ and $z$ with $x,y$ and $z$ at least $3$ and $A,B$ and $C$ coprime. Beal has offered a prize of one million dollars for a proof of his conjecture or a counterexample \cite{16}. Obviously, there are only finite solutions for Beal's equation under Fermat-Catalan conjecture. Meanwhile, it's known that both FLT and Fermat-Catalan conjecture are the consequences of the abc-conjecture, the latter was claimed to be proved in 2012 but not confirmed yet by Japanese mathematician Shinichi Mochizuki \cite{9}.

Now we expand the same idea to Fermat's equation as  we did before to Hilbert-Waring problem. We consider a new Diophantine equation
\begin{equation}
  \begin{cases}
      A+B=C\\
      ABC=D^n\\
    \end{cases}
    \tag{1}
\end{equation}
where $A,B,C,D,n \in\ZZ_{+}$ and $n\geq3$. \\
It is easy to see that if $\gcd(A,B,C)=1$ then $A,B,C$ are  pairwise coprime. Therefore,
\begin{equation}
  \fbox{Equation~$(1)$~has~no~positive~integer solutions~for~$\gcd(A,B,C)=1$  $\Longleftrightarrow$ FLT}
\end{equation}
 In view of (2), we may ask some natural questions concerning $(1)$ :\\
 1. Is it possible to have a solution for all $n\geq3$? If so, is it possible to have infinitely many solutions ? \\
 2. Is it possible to have a solution when $\gcd(A,B,C)=p^k$, where $p$ is a prime, $k\in \ZZ_{+}$ ?\\
 In this paper, we answer the first question affirmatively by proving the following result.
 \begin{theorem}
   For any $n \not\equiv 0 \pmod3, n\geq3 $, $(1)$ has infinitely many positive integer solutions; for any $n \equiv 0~\pmod3, n\geq3 $, $(1)$ has no positive integer solutions.
 \end{theorem}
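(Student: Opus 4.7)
The plan is to split into two cases by the residue of $n$ modulo $3$, using an explicit construction for the existence half and a reduction to classical FLT for the non-existence half.

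For $n \not\equiv 0 \pmod 3$, I would exploit the symmetry of the system by imposing the ansatz $A = B$. Under this choice $C = 2A$ and $ABC = 2A^3$, so the problem reduces to making $2A^3$ a perfect $n$-th power. Since $\gcd(3,n) = 1$, there exists a positive integer $t$ with $3t \equiv -1 \pmod n$; writing $3t + 1 = ns$ and setting $A = B = 2^{t}k^{n}$ for any $k \in \ZZ_+$, one computes
\[
ABC \;=\; 2A^3 \;=\; 2^{3t+1} k^{3n} \;=\; \bigl(2^{s} k^{3}\bigr)^{n},
\]
so $D = 2^{s} k^{3}$ completes a solution. Letting $k$ range over the positive integers produces infinitely many distinct quadruples $(A,B,C,D)$.

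For $n \equiv 0 \pmod 3$, I would argue by contradiction, reducing to Fermat's Last Theorem for exponent $3$. Given any positive solution, $ABC = D^{n}$ is a perfect cube because $3 \mid n$. Letting $d = \gcd(A,B)$ and writing $A = da$, $B = db$ with $\gcd(a,b) = 1$, one gets $C = d(a+b)$ and
\[
ABC \;=\; d^{3}\, a\, b\, (a+b).
\]
Since both $ABC$ and $d^{3}$ are cubes, $ab(a+b)$ is a cube as well. The hypothesis $\gcd(a,b)=1$ forces $a$, $b$, $a+b$ to be pairwise coprime, so by unique factorization each factor is individually a cube: $a = u^{3}$, $b = v^{3}$, $a+b = w^{3}$. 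This gives $u^{3} + v^{3} = w^{3}$ with $u,v,w \in \ZZ_+$, contradicting Euler's theorem, i.e.\ the $n=3$ case of FLT.

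No step presents a serious obstacle: the constructive half is really just solving a single congruence $3t+1 \equiv 0 \pmod n$ to fix the $2$-adic exponent, and the impossibility half reduces cleanly to Euler's classical theorem once one factors out $d = \gcd(A,B)$. The only place requiring a touch of care is the prime-by-prime argument showing that pairwise coprime factors with cubic product are each individually cubes, and verifying that $a,b,a+b$ are indeed pairwise coprime once $\gcd(a,b)=1$.
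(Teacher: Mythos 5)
Your proposal is correct, and while the impossibility half matches the paper, the existence half takes a genuinely different (and simpler) route. For $n \not\equiv 0 \pmod 3$ the paper starts from Pythagorean triples $a^2+b^2=c^2$ and sets $A=a^{k+2}b^kc^k$, $B=a^kb^{k+2}c^k$, $C=a^kb^kc^{k+2}$ with $3k+2\equiv 0 \pmod n$, so that $ABC=(abc)^{3k+2}$ is an $n$-th power; infinitude comes from varying the triple $(a,b,c)$. You instead scale the trivial identity $1+1=2$, taking $A=B=2^{t}k^{n}$ with $3t+1\equiv 0 \pmod n$, and get infinitude by varying $k$. Both constructions hinge on exactly the same arithmetic fact, the invertibility of $3$ modulo $n$, used to solve a linear congruence for the exponent; yours is more economical (one base identity, one congruence), while the paper's yields solutions with $A,B,C$ pairwise distinct and coming from essentially different primitive triples rather than all being multiples of $(1,1,2)$ --- though the theorem as stated requires neither. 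For $n\equiv 0\pmod 3$ your argument is essentially the paper's: both factor out a gcd ($\gcd(A,B)$ for you, $\gcd(A,B,C)$ in the paper; these coincide since $C=A+B$), observe that the three reduced factors are pairwise coprime with product a perfect cube, conclude each factor is itself a cube, and contradict the exponent-$3$ case of FLT. The only cosmetic difference is how integrality of the cube root is justified: the paper shows $d \mid D^{n/3}$ from $d^3 \mid D^n$, whereas you invoke the standard fact that a positive integer which is the cube of a rational is the cube of an integer; both steps are sound.
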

 For the second question, we discuss the special cases $n=4,5$ and  obtain the following
 \begin{theorem}
   If $\gcd(A,B,C)=p^k$ where $k\in \ZZ_{+}$, p is odd prime and $p \equiv 3~\pmod8$, then the equation
\begin{equation}
   \begin{cases}
      A+B=C\\
      ABC=D^4\\
    \end{cases}
\end{equation}
has no positive integer solutions.
 \end{theorem}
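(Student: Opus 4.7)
The plan is to analyze a hypothetical positive integer solution $(A,B,C,D)$ through the $p$-adic valuations of $A,B,C$ and reduce to a short list of Fermat-type equations, each of which can be ruled out. Let $\alpha=v_p(A)$, $\beta=v_p(B)$, $\gamma=v_p(C)$. The relation $A+B=C$ together with $\gcd(A,B,C)=p^k$ forces $\min(\alpha,\beta,\gamma)=k$, and after the symmetry $\alpha\le\beta$ there are three exhaustive cases: (I) $\alpha=\beta=\gamma=k$; (II) $\alpha=\beta=k<\gamma=k+\ell$ (the sum cancels $p$-factors); (III) $\alpha=k<\beta=k+m$, forcing $\gamma=k$. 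In each case I write $A,B,C$ as $p^k$ times $p$-unit cofactors $a,b,c$ (absorbing the extra $p$-powers into the appropriate cofactor), check via the linear relation and $\gcd(a,b,c)=1$ that $a,b,c$ are pairwise coprime, and then deduce from $ABC=D^4$ and the pairwise coprimality that each of $a,b,c$ must itself be a fourth power. Writing $a=x^4$, $b=y^4$, $c=z^4$ and reinserting into the linear relation reduces the three cases to
\[
\text{(I)}\ x^4+y^4=z^4,\qquad\text{(II)}\ x^4+y^4=p^\ell z^4,\qquad\text{(III)}\ x^4+p^m y^4=z^4,
\]
each with $p\nmid xyz$ and $x,y,z$ pairwise coprime.

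Cases (I) and (II) are immediate. Case (I) is Fermat's Last Theorem for the exponent $n=4$, excluded by Fermat's own infinite descent. For Case (II), reducing modulo $p$ yields $(xy^{-1})^4\equiv-1\pmod p$, which forces $-1$ to be a square modulo $p$ and contradicts $p\equiv 3\pmod 4$.

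Case (III) is the real work. I rewrite the equation as $z^4-x^4=p^m y^4$ and factor the left side as $(z-x)(z+x)(z^2+x^2)$. Since $-1$ is not a square mod $p$, we have $p\nmid z^2+x^2$, so $p^m$ is concentrated in $(z-x)(z+x)$. A parity analysis shows that the three factors are pairwise coprime up to a controlled $2$-part, and unique factorization in $\mathbb{Z}$ then gives $z-x=p^m u^4$, $z+x=v^4$, $z^2+x^2=w^4$ (or the symmetric variant $p^m\mid z+x$). The triple $(x,z,w^2)$ is a primitive Pythagorean triple, which I parameterize as $x=2st$, $z=s^2-t^2$, $w^2=s^2+t^2$ with $\gcd(s,t)=1$ of opposite parity. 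The identity $z-x=p^m u^4$ then becomes
\[
(s-t)^2-2t^2=p^m u^4,
\]
which I read as the norm of $\xi=(s-t)+t\sqrt 2$ in $\mathbb{Z}[\sqrt 2]$. The hypothesis $p\equiv 3\pmod 8$ now enters: since $\bigl(\tfrac{2}{p}\bigr)=-1$, the prime $p$ is inert in the UFD $\mathbb{Z}[\sqrt 2]$ and satisfies $\bar p=p$. Because $s-t$ is odd and coprime to $t$, $\xi$ and $\bar\xi$ are coprime in $\mathbb{Z}[\sqrt 2]$; combined with $v_p(\bar\xi)=v_p(\xi)$ (self-conjugacy of $p$) this forces $m=v_p(\xi\bar\xi)=2v_p(\xi)$, so $m$ is even and $p^{m/2}\mid\xi$. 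But then $p^{m/2}$ divides both $s-t$ and $t$, hence $\gcd(s,t)=1$, forcing $m=0$ and contradicting $m\ge 1$. The main obstacle will be the parity sub-case in which $x$ and $z$ are both odd: one must strip off common factors of $2$ from $(z-x)$, $(z+x)$, $(z^2+x^2)$ before the same $\mathbb{Z}[\sqrt 2]$-inertness argument applies, and I expect no new idea is needed beyond careful $2$-adic bookkeeping.
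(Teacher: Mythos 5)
Your case decomposition is sound, and Cases (I) and (II) reproduce what the paper itself does: (I) is the $k\equiv 0\pmod 4$ situation disposed of by Fermat's Last Theorem for exponent $4$ (the paper's Lemma 4), and (II) is exactly the paper's equation $x^4+y^4=p^{4-r}z^4$, killed by the observation that $-1$ would be a fourth power, hence a square, modulo $p$. The divergence is Case (III): the paper does no descent at all there, but instead quotes a congruent-number criterion (its Lemma 1, from Cohen: $x^4-y^4=cz^2$ has a rational solution with $xyz\neq 0$ if and only if $|c|$ is a congruent number) together with the theorem that a prime $p\equiv 3\pmod 8$ is not a congruent number (its Lemma 2, from Tunnell), plus the classical fact that $X^4-Y^4=Z^2$ has no nontrivial solutions to handle the exponent-two case. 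Because that criterion concerns \emph{rational} solutions, it is insensitive to all parity and sign issues; your elementary route must confront them head-on.

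That is where you have a genuine gap, and it is larger than you suggest. Your Pythagorean argument (which is correct, and in fact simpler than you make it: from $(s-t)^2-2t^2\equiv 0\pmod p$ one reads off directly that either $2$ is a square modulo $p$ or $p\mid\gcd(s,t)$, with no need for $\mathbb{Z}[\sqrt{2}]$) requires $z-x$, $z+x$, $z^2+x^2$ to be odd and pairwise coprime, i.e. $x\not\equiv z\pmod 2$. In the both-odd sub-case the structure changes in an essential way, not a bookkeeping way: the third factor becomes $z^2+x^2=2w^4$, and this equation has nontrivial primitive solutions (e.g. $31^2+17^2=2\cdot 5^4$), so it cannot be excluded on its own; the contradiction must come from combining it with the splitting of $z^2-x^2$. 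Carrying this out, one sets $P=(z+x)/2$, $Q=(z-x)/2$, so that $P^2+Q^2=w^4$ is a primitive triple, and then, according to where the power of $2$ and the factor $p^m$ land among $z\pm x$, two of the four resulting sub-cases give $s^4-t^4=(\text{square})$ — which needs the classical Fermat theorem on $X^4-Y^4=Z^2$, an ingredient appearing nowhere in your outline — while the other two need a second, nested Pythagorean parameterization before your ``$2$ is a quadratic residue mod $p$'' contradiction reappears. So the both-odd case is completable within your circle of ideas, but it requires an extra layer of descent and an extra classical theorem; as written, ``careful $2$-adic bookkeeping'' names away a missing piece of the proof rather than supplying it.
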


For $p=2$ and some $p \equiv 1,5,7~\pmod8$, it is possible for $(3)$ to have a positive integer solution when $\gcd(A,B,C)=p$. For example
\begin{equation*}
    \begin{cases}
      2+2=4\\
      2\times2\times4=2^4,\\
    \end{cases}
    \begin{cases}
      17+272=289\\
      17\times272\times289=34^4,\\
    \end{cases}
    \begin{cases}
      5+400=405\\
      5\times400\times405=30^4,\\
    \end{cases}
  \end{equation*}
  \begin{equation*}
    \begin{cases}
    47927607119+1631432881=49559040000\\
      47927607119\times1631432881\times49559040000=44367960^4,\\
    \end{cases}
  \end{equation*}
where $p=\gcd(A,B,C)=2, 17,5,239$, so $p \equiv 1,5,7~\pmod8 $ respectively.
 \begin{theorem}
   If $\gcd(A,B,C)=p^k$ where $k\in \ZZ_{+}$, p is odd prime and $p \not\equiv 1~\pmod{10} $, then the equation
 \begin{equation}
  \begin{cases}
      A+B=C\\
      ABC=D^5\\
    \end{cases}
\end{equation}
has no positive integer solutions.
 \end{theorem}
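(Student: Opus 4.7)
I begin by extracting the $p$-adic parts of $A, B, C$: write $A = p^\alpha a$, $B = p^\beta b$, $C = p^\gamma c$ with $p \nmid abc$. Any prime $q\neq p$ dividing two of $a,b,c$ would, via $A+B=C$, divide all of $A,B,C$ and hence $\gcd(A,B,C)=p^k$, a contradiction; so $a,b,c$ are pairwise coprime. By the $A\leftrightarrow B$ symmetry of $(4)$ I may assume $\alpha\le\beta$, and then $A+B=C$ together with $\min(\alpha,\beta,\gamma)=k$ forces either (i) $\alpha=\gamma=k<\beta$, or (ii) $\alpha=\beta=k\le\gamma$. Writing $D=p^m d$ with $p\nmid d$, the relation $ABC=D^5$ reduces to $abc=d^5$; pairwise coprimality then yields $a=u^5$, $b=v^5$, $c=w^5$ for pairwise coprime positive integers $u,v,w$ with $p\nmid uvw$, and substituting back gives
\begin{equation*}
u^5 + p^{\beta-k}v^5 = w^5 \quad \text{in case (i)}, \qquad u^5 + v^5 = p^{\gamma-k}w^5 \quad \text{in case (ii)}.
\end{equation*}
The subcase $\gamma=k$ of (ii) is Fermat's equation $u^5+v^5=w^5$, ruled out by FLT, so I only need to eliminate pairwise coprime solutions (with $p\nmid uvw$) to $u^5+p^j v^5 = w^5$ and $u^5+v^5 = p^j w^5$ for $j\ge 1$.

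The congruence hypothesis enters at this point: $p\not\equiv 1\pmod{10}$ is equivalent, for odd $p$, to $5\nmid p-1$, so the fifth-power map is a bijection on $\mathbb{F}_p^{\times}$. Reducing $u^5+v^5\equiv 0\pmod p$ therefore forces $u\equiv -v\pmod p$, i.e.\ $p\mid u+v$; analogously $p\mid w-u$ in case (i). Factoring $u^5+v^5=(u+v)\Phi(u,v)$ with $\Phi(u,v)=u^4-u^3v+u^2v^2-uv^3+v^4$, the identity $\Phi(-v,v)=5v^4$ gives $\gcd(u+v,\Phi(u,v))\mid 5$. For $p\neq 5$ the $p$-content of $u^5+v^5$ thus sits entirely in $u+v$, and coprimality forces
\begin{equation*}
u+v = p^j r^5, \qquad \Phi(u,v) = t^5, \qquad w=rt,
\end{equation*}
with a harmless refinement by a factor of $5$ when $5\mid u+v$, and a separate direct treatment of $p=5$. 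The case (i) equation is handled by the parallel factorization $w^5-u^5 = (w-u)(w^4+w^3u+w^2u^2+wu^3+u^4)$.

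What remains, and what I expect to be the main obstacle, is to show that the auxiliary equation $\Phi(u,v)=t^5$ has no nontrivial coprime solutions. My plan is to mirror the Dirichlet--Legendre proof of Fermat's Last Theorem for exponent $5$: work in the cyclotomic ring $\mathbb{Z}[\zeta_5]$, which has class number one, use the factorization $\Phi(u,v)=\prod_{i=1}^{4}(u+\zeta_5^i v)$, and combine unique factorization of ideals with the unit structure of $\mathbb{Z}[\zeta_5]$ to manufacture a strictly smaller coprime solution of the same form, yielding an infinite descent. Pinning down the unit ambiguities and the contribution of the unique ramified prime above $5$ is the genuine technical crux; the material preceding it is essentially $p$-adic bookkeeping.
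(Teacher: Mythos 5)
Your reduction is correct and, up to bookkeeping, is the same as the paper's: the paper also strips off $p^k$, observes that exactly one of the three pairwise coprime parts can carry the residual power of $p$, writes the other two as fifth powers, and lands on $x^5+y^5=p^{5-r}z^5$ with $\gcd(x,y)=1$; your cases (i) and (ii) are both of this shape after moving a term across the equality and absorbing signs. The genuine divergence, and the genuine gap, is what happens next. The paper does not attempt to analyze $x^5+y^5=p^jz^5$ at all: it quotes as its Lemma 3 the theorem of Halberstadt and Kraus (2004), conjectured by V.~A.~Lebesgue in 1843, that $x^5+y^5=Az^5$ with $\gcd(x,y)=1$ has no nonzero integer solutions whenever $A>2$ has no prime divisor of the form $10k+1$. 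That result is tailored exactly to the hypothesis $p\not\equiv 1\pmod{10}$, covers both of your cases at once (it allows negative integers, so $w^5-u^5=p^jv^5$ is included), and finishes the theorem in one line.

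Everything you do after the reduction --- bijectivity of fifth powers modulo $p$, the factorization $u^5+v^5=(u+v)\Phi(u,v)$, the observation $\gcd(u+v,\Phi(u,v))\mid 5$, hence $u+v=p^jr^5$ and $\Phi(u,v)=t^5$ up to factors of $5$ --- is correct, but it is precisely the classical first step that was already available to Dirichlet and Lebesgue. The remaining claim, that $\Phi(u,v)=t^5$ has no nontrivial coprime solutions, is not a technical detail to be pinned down: it is the entire content of Lebesgue's conjecture, and it withstood exactly the cyclotomic-descent attack you propose (class number one of $\mathbb{Z}[\zeta_5]$, unit analysis, infinite descent) for 160 years. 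The presence of the factor $p^j$ destroys the symmetry that makes the descent close up in the Fermat case $n=5$, and the theorem was eventually proved by Halberstadt and Kraus using the modular method (Frey curves, level lowering, modularity), not by any descent in $\mathbb{Z}[\zeta_5]$. So your proposal is incomplete at its crux, and the route you sketch for completing it is the one that historically fails; the repair is simply to invoke Halberstadt--Kraus at the moment you reach $u^5+v^5=p^jw^5$, which is exactly what the paper does.
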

In general, we have the follows:
\begin{conjecture}
  If $n\geq 3$ is prime, $\gcd(A,B,C)=p^k$ where $k\in \ZZ_{+}$, p is odd prime and $p \not\equiv 1~\pmod{2n} $, then $(1)$ has no positive integer solutions.
\end{conjecture}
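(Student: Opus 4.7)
The plan extends the cyclotomic strategy underlying Theorems~3 and~4 to arbitrary odd prime exponent $n$, working in $\mathbb{Z}[\zeta_n]$. Writing $A = p^k a$, $B = p^k b$, $C = p^k c$, the hypothesis $\gcd(A,B,C) = p^k$ combined with $a+b=c$ forces $a,b,c$ to be pairwise coprime, so $p$ divides at most one of them. By the $A\leftrightarrow B$ symmetry it suffices to treat $p\mid c$; set $t = v_p(c)$. The requirement $n\mid v_p(D^n) = 3k+t$ pins down $t$ modulo $n$; writing $c = p^t c'$ with $\gcd(c',p)=1$, pairwise coprimality of $a,b,c'$ combined with $abc' = \bigl(D/p^{(3k+t)/n}\bigr)^n$ forces each to be a perfect $n$-th power, say $a = \alpha^n$, $b = \beta^n$, $c' = \gamma^n$, reducing (1) to
\begin{equation*}
\alpha^n + \beta^n \;=\; p^t \gamma^n, \qquad \gcd(\alpha\beta\gamma,\,p)=1,\qquad \alpha,\beta,\gamma \text{ pairwise coprime},\qquad t\geq 1
\end{equation*}
(the case $t=0$ is the classical FLT). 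The symmetric case $p\mid a$ leads analogously to $\gamma^n - \beta^n = p^s\alpha^n$.

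Next I factor in $\mathbb{Z}[\zeta_n]$: $\alpha^n + \beta^n = \prod_{j=0}^{n-1}(\alpha + \zeta_n^j \beta)$. The $n$ factors are pairwise coprime outside the ramified prime $\lambda := (1-\zeta_n)$ above $n$. For $p\neq n$, fix a prime $\mathfrak{p}\subset\mathbb{Z}[\zeta_n]$ above $p$ with residue degree $f = \mathrm{ord}_n(p)$. Since $p$ is odd and $n$ is an odd prime, the hypothesis $p\not\equiv 1\pmod{2n}$ is equivalent to $p\not\equiv 1\pmod n$, i.e., $f>1$. The Frobenius $\sigma_p\in\mathrm{Gal}(\mathbb{Q}(\zeta_n)/\mathbb{Q})$ fixes $\mathfrak{p}$ and permutes $\{\alpha + \zeta_n^j\beta\}_{j\in(\mathbb{Z}/n)^*}$ in orbits of size $f>1$; if $v_\mathfrak{p}(\alpha + \zeta_n^j\beta) > 0$ for some $j\ne 0$, every factor in the orbit would share $\mathfrak{p}$, violating pairwise coprimality. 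Hence $v_\mathfrak{p}(\alpha + \zeta_n^j\beta) = 0$ for every $j\ne 0$ and every $\mathfrak{p}$ above $p$, so the whole $p$-content of the product collapses onto the rational factor $\alpha+\beta$: matching valuations yields $v_p(\alpha+\beta) = t$ and $v_p\bigl((\alpha^n+\beta^n)/(\alpha+\beta)\bigr) = 0$.

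With the $p$-behaviour pinned down, pairwise coprimality of the remaining $n-1$ cyclotomic factors and the fact that their product $M := (\alpha^n+\beta^n)/(\alpha+\beta)$ is an $n$-th power in $\mathbb{Z}$ (up to a controllable contribution at $n$) force each ideal $(\alpha + \zeta_n^j\beta)$, $j\neq 0$, to be an $n$-th power of a fractional ideal in $\mathbb{Z}[\zeta_n]$, modulo bounded powers of $\lambda$. For a regular prime $n$, this lifts to an equality of elements $\alpha + \zeta_n^j\beta = u_j\,\epsilon_j^n$ with $u_j\in\mathbb{Z}[\zeta_n]^\times$; a Kummer-style $\mathbb{Z}[\zeta_n]$-linear relation among three such factors, reduced modulo $\lambda$, would then produce either an infinite descent or a direct congruence contradiction. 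The ramified case $p=n$ would require a separate $\lambda$-adic analysis mimicking the second case of Kummer's FLT.

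The main obstacle is exactly where Kummer's FLT argument stalls: for an irregular prime $n$, an $n$-th ideal power need not be the $n$-th power of a principal ideal, so the lift from ideals to elements breaks down and the descent collapses. Without additional control of the $n$-torsion in $\mathrm{Cl}(\mathbb{Q}(\zeta_n))$---and the analogous ``second case'' subtlety when $n\mid\alpha\beta\gamma$---the scheme above yields at best a conditional result for regular $n$, and a complete resolution likely requires tools from the modern theory of generalized Fermat equations (Frey curves, modularity lifting, level-lowering) of the kind underlying Wiles's proof of FLT, which is why we pose the statement only as a conjecture.
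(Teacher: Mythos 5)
You should first note what you were up against: the statement is Conjecture~1 of the paper, and the paper itself offers \emph{no} proof of it. The authors prove only the analogues for exponent $4$ (Theorem~3, via congruent numbers and Tunnell's theorem) and exponent $5$ (Theorem~4, via the Halberstadt--Kraus theorem on $x^5+y^5=Az^5$), plus $n=3$ through Theorem~2; the general prime $n$ is explicitly left open. So there is no paper proof to compare against, and your proposal can only be assessed as a free-standing attempt. The first half of your attempt is correct and is the natural generalization of the paper's own reductions: writing $A=p^ka$, etc., deducing pairwise coprimality, pinning $t=v_p(c)$ by $n\mid 3k+t$, and extracting $\alpha^n+\beta^n=p^t\gamma^n$ with $p\nmid\alpha\beta\gamma$ is exactly the passage the paper makes in $(6)$--$(9)$ and $(13)$--$(15)$. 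Your observation that for odd $p$ and odd prime $n$ one has $p\not\equiv 1\pmod{2n}\iff p\not\equiv 1\pmod{n}$, and the Frobenius-orbit argument showing that when $\mathrm{ord}_n(p)>1$ every prime of $\mathbb{Z}[\zeta_n]$ above $p$ must avoid the factors $\alpha+\zeta_n^j\beta$, $j\neq 0$, so that $v_p(\alpha+\beta)=t$, are both sound, and they identify correctly where the hypothesis $p\not\equiv 1\pmod{2n}$ enters the problem.

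However, the proposal is not a proof, as you yourself concede in the final paragraph, and the gap is exactly the load-bearing step. Reducing to the twisted Fermat equation $\alpha^n+\beta^n=p^t\gamma^n$ is the easy part; concluding non-existence of solutions is the hard part, and your sketch of a ``Kummer-style relation producing an infinite descent or congruence contradiction'' is not carried out and cannot be taken on faith even for regular $n$: the twist $p^t$ sits entirely in the rational factor $\alpha+\beta$, which is therefore \emph{not} a unit times an $n$-th power, and this breaks the unit-lemma step of Kummer's Case~1 argument; known unconditional results on $x^n+y^n=Bz^n$ (Maillet- and D\'enes-type theorems, Darmon--Merel for $B=2$, Halberstadt--Kraus for $n=5$) require either restrictive hypotheses on $B$ or modular machinery, which is presumably why the paper could settle $n=5$ only by citing Halberstadt--Kraus. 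In addition, the ramified case $p=n$ satisfies $p\not\equiv 1\pmod{2n}$ and so falls under the conjecture, but you defer it entirely. In short: your reduction and your use of the congruence hypothesis are correct and would be a sensible opening section of any future proof, but the statement remains exactly what the paper says it is---a conjecture---and your attempt, honestly, does not close it.
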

Finally, if $n>3$ is prime, we construct special prime $p$ such that $(1)$ has  positive integer solutions for $\gcd(A,B,C)=p^k$ as following
\begin{theorem}
  If $n>3$ is prime, $n\equiv r~\pmod3 $, $1\leq r\leq 2$, $a,b,m \in \ZZ$, $m\neq0$, such that $\frac{a^n+b^n}{a+b}=p$ is an odd prime and $a+b=m^n$, then $p\equiv1~\pmod{2n}$ and  $(1)$ has positive integer solutions for $\gcd(A,B,C)=p^k$ where $k \equiv \frac{rn-1}{3}~\pmod{n}$, $k\in \ZZ_{+}$.
\end{theorem}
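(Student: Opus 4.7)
The plan is to establish the two conclusions of the theorem separately: (i) the congruence $p \equiv 1 \pmod{2n}$, and (ii) an explicit parametric family of positive integer solutions to $(1)$ with $\gcd(A,B,C) = p^k$.

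\emph{For (i),} I work modulo $p$. A short divisibility argument first shows $\gcd(a,b) = 1$: any common prime divisor $d$ would satisfy $d^{n-1} \mid p$, which is impossible for $n > 3$. In particular $p \nmid ab$. Reducing the relation $a^n + b^n \equiv 0 \pmod p$, the element $u = a b^{-1} \in \mathbb{F}_p^{\times}$ satisfies $u^n = -1$; hence $\mathrm{ord}(u) \mid 2n$ but $\mathrm{ord}(u) \nmid n$, and since $n$ is prime the only possibilities are $\mathrm{ord}(u) = 2$ or $\mathrm{ord}(u) = 2n$. The latter is exactly the desired congruence $p \equiv 1 \pmod{2n}$. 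To exclude $\mathrm{ord}(u) = 2$, note that this means $p \mid a+b$, whereupon the Lifting-the-Exponent Lemma (valid because $p$ is odd, $p \nmid ab$, $n$ is odd, and $p \mid a+b$) gives $v_p\bigl((a^n+b^n)/(a+b)\bigr) = v_p(n)$, and since the left side equals $1$ this forces $p = n$. The hypothesis $a+b = m^n$ now rules this out: if $p = n$ then $n \mid m$, so $|a+b| \geq n^n$, while the identity $a^n+b^n = n(a+b)$ combined with elementary size estimates (treating same-sign and opposite-sign cases for $a,b$) forces $\max(|a|,|b|) \leq 1$ for $n \geq 5$, hence $|a+b| \leq 2 < n^n$, a contradiction.

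\emph{For (ii),} after possibly replacing $(a,b,m)$ by $(-a,-b,-m)$ (permissible because $n$ is odd), I may assume $a+b > 0$. In the principal case $a,b > 0$, I set
\[
A = p^{k} a^n,\quad B = p^{k} b^n,\quad C = p^{k+1} m^n
\]
for any positive integer $k$ with $k \equiv (rn-1)/3 \pmod{n}$; this residue class is well-defined because $\gcd(3,n) = 1$, and a direct case check in $r \in \{1,2\}$ confirms that it coincides with the class of $-3^{-1} \bmod n$. Then $A + B = C$ is immediate from $a^n + b^n = p(a+b) = p m^n$, and $ABC = (abm)^n p^{3k+1}$ is a perfect $n$-th power because $3k + 1 \equiv 0 \pmod n$, giving $D = abm \cdot p^{(3k+1)/n}$. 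The coprimality facts $\gcd(a,b) = 1$ and $p \nmid abm$ (the last using $p \ne n$, hence $p \nmid m^n = a+b$) yield $\gcd(A,B,C) = p^k$. If $a$ and $b$ have opposite signs, say $a > |b| > 0 > b$, the identity rearranges as $a^n = |b|^n + p m^n$, and the analogous assignment $A = p^{k} |b|^n$, $B = p^{k+1} m^n$, $C = p^{k} a^n$ produces a positive solution with the same $\gcd$.

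The main obstacle is the exclusion of $p = n$ in part (i); this is where the hypothesis $a + b = m^n$ interacts nontrivially with the Lifting-the-Exponent Lemma and with the size of $|a+b|$. Once this is handled, the congruence on $p$ and the explicit construction both follow by routine arithmetic bookkeeping.
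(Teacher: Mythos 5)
Your proposal is correct, and while part (ii) matches the paper, part (i) takes a genuinely different route. The construction is essentially identical to the paper's: it takes $A = p^{\frac{rn-1}{3}+tn}a^n$, $B = p^{\frac{rn-1}{3}+tn}b^n$, $C = p^{\frac{rn+2}{3}+tn}(a+b)$, which is exactly your $A = p^k a^n$, $B = p^k b^n$, $C = p^{k+1}m^n$ with the same $D$; in fact you are more careful than the paper here, since when $ab<0$ (e.g.\ $a=2$, $b=-1$, the case behind Corollary 1) the paper's $B$ and $D$ are negative and the rearrangement needed to produce genuinely \emph{positive} solutions is left implicit, whereas your relabeling $A = p^k|b|^n$, $B = p^{k+1}m^n$, $C = p^k a^n$ handles it explicitly. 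For the congruence $p\equiv 1\pmod{2n}$, the paper works modulo $n$: by Fermat's little theorem $p(a+b)=a^n+b^n\equiv a+b\pmod n$, so either $n\nmid a+b$ and $p\equiv 1\pmod n$ (hence mod $2n$, as $p$ is odd), or $n\mid a+b$, which forces $p\equiv na^{n-1}\equiv 0\pmod n$, i.e.\ $p=n$. You instead work modulo $p$: the order of $ab^{-1}$ in $\mathbb{F}_p^{\times}$ divides $2n$ but not $n$, so it is $2$ or $2n$; order $2n$ gives the congruence in one stroke (no separate parity step), and order $2$, i.e.\ $p\mid a+b$, is converted to $p=n$ via lifting the exponent. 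Both routes must then kill the case $p=n$: the paper does this by pure size estimates, showing $(a^n+b^n)/(a+b)>n$ in every sign configuration (so the hypothesis $a+b=m^n$ is only needed there to ensure $a+b\neq 0$), while you combine size estimates with $n\mid a+b=m^n\Rightarrow n\mid m\Rightarrow |a+b|\geq n^n$. Your order-plus-LTE argument is arguably cleaner and gets the full modulus $2n$ directly; the paper's is more elementary (Fermat's little theorem only). One small point: your closing remark that the hypothesis $a+b=m^n$ "interacts nontrivially" with the exclusion of $p=n$ overstates its role — the paper's unconditional size argument shows it is dispensable for that step.
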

Let $a=2, b=-1, m=1$, we obtain
\begin{corollary}
  If  $n>3$ is prime, $n\equiv r~\pmod3 $, $1\leq r\leq 2$ and $p=2^n-1$ is Mersenne prime, then  $(1)$ has positive integer solutions for $\gcd(A,B,C)=p^k$ where $k \equiv \frac{rn-1}{3}~\pmod{n}$, $k\in \ZZ_{+}$.
\end{corollary}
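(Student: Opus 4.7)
The plan is to apply Theorem 5 directly, choosing the specific values $a=2$, $b=-1$, $m=1$. Since this is a specialization of the preceding theorem, the main task is simply to verify the hypotheses of Theorem 5, after which its conclusion immediately supplies the existence of positive integer solutions with $\gcd(A,B,C) = p^k$ for the indicated residue class of $k$ modulo $n$.

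First I would check the algebraic condition $a+b = m^n$: clearly $2 + (-1) = 1 = 1^n$, so $m = 1 \neq 0$ as required. Next, because $n > 3$ is prime, $n$ is odd, and hence $(-1)^n = -1$. Therefore
\begin{equation*}
\frac{a^n + b^n}{a+b} \;=\; \frac{2^n + (-1)^n}{1} \;=\; 2^n - 1,
\end{equation*}
which by hypothesis equals the Mersenne prime $p$, an odd prime whenever $n \geq 2$. All hypotheses of Theorem 5 are therefore satisfied, so its conclusion yields the desired solutions to $(1)$ with $\gcd(A,B,C) = p^k$ for $k \equiv (rn-1)/3 \pmod{n}$, $k \in \ZZ_+$.

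There is essentially no obstacle, since the corollary is a direct specialization; the only minor care required is in using the oddness of $n$ to ensure that $(a^n+b^n)/(a+b)$ produces $2^n - 1$ rather than $2^n + 1$. As a consistency check, the conclusion $p \equiv 1 \pmod{2n}$ asserted by Theorem 5 recovers the classical fact that every prime divisor of a Mersenne number $2^n-1$ (with $n$ prime) is congruent to $1$ modulo $2n$.
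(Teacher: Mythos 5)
Your proposal is correct and follows exactly the paper's route: the paper derives this corollary from Theorem 5 by the same substitution $a=2$, $b=-1$, $m=1$, and your verification of the hypotheses (that $a+b=1=1^n$ and $\frac{2^n+(-1)^n}{a+b}=2^n-1=p$ is an odd prime) is precisely what makes that specialization legitimate. Nothing further is needed.
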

Moreover, we have the following
\begin{conjecture}
 If  $n>3$ is prime, $n\equiv r~\pmod3$, $1\leq r\leq 2$ and $\gcd(A,B,C)=p^k$ where $p$ is prime and $k \not\equiv \frac{rn-1}{3}~\pmod{n}$ , then $(1)$ has no positive integer solutions.
\end{conjecture}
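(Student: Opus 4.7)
The plan is to mirror the $p$-adic reduction used in Theorems 3 and 4, carry it out for a general prime $n>3$ with $n\not\equiv 0\pmod{3}$, and then to identify the arithmetic obstruction that singles out the residue class $k\equiv \frac{rn-1}{3}\pmod{n}$.

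I would first strip off the common factor by writing $A=p^k\alpha$, $B=p^k\beta$, $C=p^k\gamma$ with $\gcd(\alpha,\beta,\gamma)=1$. Then $\alpha+\beta=\gamma$ with $\alpha,\beta,\gamma$ pairwise coprime, and $ABC=D^n$ becomes $p^{3k}\alpha\beta\gamma=D^n$. Comparing $p$-adic valuations yields $v_p(\alpha\beta\gamma)\equiv -3k\pmod{n}$. A direct calculation shows that $\frac{rn-1}{3}\equiv (-3)^{-1}\pmod{n}$ in both the $r=1$ and $r=2$ cases, so the excluded residue $k\equiv \frac{rn-1}{3}\pmod{n}$ corresponds precisely to $v_p(\alpha\beta\gamma)\equiv 1\pmod{n}$, which is exactly the valuation realized by the construction of Theorem 5 (where $\gamma=p\cdot m^n$, $\alpha=a^n$, $\beta=b^n$).

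Next, since $\alpha,\beta,\gamma$ are pairwise coprime, at most one is divisible by $p$; let $c$ be the $p$-adic valuation of that factor (with $c=0$ if none). After absorbing $n$-th powers of $p$ into an auxiliary variable one may assume $c\in\{0,1,\dots,n-1\}$. If $c=0$, each of $\alpha,\beta,\gamma$ is a perfect $n$-th power, and $\alpha+\beta=\gamma$ degenerates to $u^n+v^n=w^n$, which is impossible by Fermat's Last Theorem. If $c\geq 1$, the same extraction of $n$-th powers yields a ternary generalized Fermat equation
\begin{equation*}
x^n+y^n=p^c z^n \quad \text{or} \quad p^c x^n+y^n=z^n,
\end{equation*}
with $x,y,z$ pairwise coprime and $p\nmid xyz$. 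The forbidden residues of $k$ translate to $c\in\{2,3,\dots,n-1\}$, and the task becomes proving that no such equation has coprime positive integer solutions.

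The main obstacle is exactly this last step: ruling out these ternary Fermat equations of signature $(n,n,n)$ uniformly in $p$ for every bad exponent $c\in\{2,\dots,n-1\}$. Because Theorem 5 already produces solutions when $c\equiv 1\pmod n$, any proof must genuinely use the value of $c\bmod n$, not merely its non-vanishing. The natural approach is the modular method: attach to a putative solution the Frey--Hellegouarch curve $E:Y^2=X(X-x^n)(X+y^n)$, apply Ribet-style level lowering to its mod-$n$ Galois representation, and show that at the resulting conductor and predicted character the requisite weight-$2$ newform can exist only when $c\equiv 1\pmod n$. Supplementary difficulties I expect to arise include the anomalous prime $p=n$ (where $\gcd(x+y,\Phi_n(x,y))$ need not equal $1$), the analysis of the auxiliary equation $\Phi_n(x,y)=p^c Y^n$ from the coprime factorization---through which the residue $c\bmod n$ actually enters---and verifying that the local conditions at $p$ indeed pin down $c\bmod n$. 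These are the reasons the statement is offered as a conjecture rather than as a theorem.
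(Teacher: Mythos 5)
This statement is Conjecture 2 of the paper: the authors give no proof of it (their remark and the abc-argument that follows concern only Conjecture 3, and they state explicitly that Conjectures 1--3 do not follow from the abc-conjecture), so there is no paper proof to compare against, and your submission is, by your own admission, a reduction plus a research program rather than a proof. The reduction itself is correct and is exactly the paper's own framework: writing $c$ for the $p$-adic valuation of the one factor among $\frac{A}{p^k},\frac{B}{p^k},\frac{C}{p^k}$ divisible by $p$, the valuation bookkeeping gives $c\equiv -3k\pmod{n}$; since $\frac{rn-1}{3}\equiv -3^{-1}\pmod{n}$, the excluded class $k\equiv\frac{rn-1}{3}\pmod{n}$ is precisely $c\equiv 1\pmod{n}$, which is where the solutions of Theorem 5 (with $(\alpha,\beta,\gamma)=(a^n,b^n,p\,m^n)$) actually live. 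Your case $c\equiv 0\pmod n$ is the paper's Lemma 4, and your reduction of the remaining cases to the coprime equations $x^n+y^n=p^cz^n$ with $p\nmid xyz$ and $c\in\{2,\dots,n-1\}$ mirrors exactly how Theorems 3 and 4 proceed for $n=4,5$ before invoking Lemmas 1--3. So the conjecture is equivalent to: for every prime $p$ and every $c\in\{2,\dots,n-1\}$, the equation $x^n+y^n=p^cz^n$ has no coprime positive solutions with $p\nmid xyz$.

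The genuine gap is this last step, and the modular method as you describe it provably cannot close it in its standard form. For the Frey curve $Y^2=X(X-x^n)(X+y^n)$ attached to such a solution one has $v_p(c_4)=0$ and $v_p(\Delta)=2c$, so the curve is multiplicative at $p$ and the mod-$n$ representation is unramified at $p$ if and only if $n\mid 2c$, i.e.\ $c\equiv 0\pmod n$. Hence level lowering yields the \emph{same} level and the same trivial character for every $c\not\equiv 0\pmod n$: the conductor-and-character output is blind to the value of $c\bmod n$ and detects only its nonvanishing. For the primes $p$ produced by Theorem 5 (all of which are $\equiv 1\pmod{2n}$), the case $c=1$ genuinely has solutions, so nonzero newforms at that level genuinely occur, and no contradiction can be reached without finer local data at $p$. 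The only such data available is the Tate parameter, with $v_p(q_E)=2c$; but an isomorphism of mod-$n$ representations fixes $q_E$ in $\mathbb{Q}_p^{\ast}/(\mathbb{Q}_p^{\ast})^{n}$ only up to an exponent in $(\ZZ/n\ZZ)^{\ast}$, and the symplectic refinements of Kraus--Oesterl\'e and Freitas--Kraus constrain at most the quadratic residue class of the ratio of discriminant valuations, so they can never separate $c=1$ from $c=4$ (a square modulo every prime $n\geq 5$). This insensitivity to $c\bmod n$ is exactly the obstruction hidden behind the hypotheses $p\equiv 3\pmod 8$ in Theorem 3 and $p\not\equiv 1\pmod{10}$ in Lemma 3, and it is why the statement remains a conjecture: your proposal locates the difficulty correctly but supplies no idea that would overcome it.
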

In particular, if  $n>3$ is prime, then $4\not\equiv rn~\pmod{n}$, so $1 \not\equiv \frac{rn-1}{3}~\pmod{n}$, it follows that we have a special case of Conjecture $2$ when $k=1$:
\begin{conjecture}
  If n is odd prime, $\gcd(A,B,C)=p$ is prime, then $(1)$ has no positive integer solutions.
\end{conjecture}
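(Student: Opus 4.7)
The plan is to quotient out the common prime $p$, translate the hypotheses into a relation on pairwise coprime integers, and then use FLT where possible and a reduced Fermat-type equation where not.

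First I would write $A=p\alpha$, $B=p\beta$, $C=p\gamma$; the relations $\alpha+\beta=\gamma$ and $\gcd(\alpha,\beta,\gamma)=1$ then force $\alpha,\beta,\gamma$ to be pairwise coprime (any common divisor of two of them divides the third). Substituting into $ABC=D^n$ yields $p^{3}\alpha\beta\gamma=D^{n}$. The hypothesis $\gcd(A,B,C)=p$ exactly translates to $\min(v_p(\alpha),v_p(\beta),v_p(\gamma))=0$, and by pairwise coprimality at most one of the three can be divisible by $p$. This leaves two cases.

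If none of $\alpha,\beta,\gamma$ is divisible by $p$, then $v_p(D^n)=3$, so $n\mid 3$, forcing $n=3$; pairwise coprimality applied to $\alpha\beta\gamma=(D/p)^{3}$ then gives $\alpha=x^{3},\beta=y^{3},\gamma=z^{3}$, and FLT for exponent $3$ produces the contradiction. If exactly one of them, say $\alpha=p^{s}\alpha'$ with $s\geq 1$ and $p\nmid\alpha'\beta\gamma$, is divisible by $p$, then $n\mid 3+s$ together with pairwise coprimality forces $\alpha'=u^{n}$, $\beta=v^{n}$, $\gamma=w^{n}$, and the additive relation becomes
\[
p^{s}u^{n}+v^{n}=w^{n}.
\]
For $n=3$ the divisibility $3\mid 3+s$ combined with $s\geq 1$ gives $s=3t$, so the left side is $(p^{t}u)^{3}+v^{3}$ and FLT again closes the case. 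This completes the proof when $n=3$, which matches the claim in the excerpt.

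For $n>3$ the smallest admissible exponent is $s=n-3$, and one is reduced to the twisted Fermat equation $p^{n-3}u^{n}+v^{n}=w^{n}$ with $u,v,w,p$ pairwise coprime; this is the step I expect to be the main obstacle. The equation is no longer pure Fermat and cannot be finished by FLT alone. The natural attack is to attach the Frey--Hellegouarch curve $Y^{2}=X(X-p^{n-3}u^{n})(X+v^{n})$, compute its conductor, and combine Wiles-type modularity with Ribet-style level lowering to rule out the putative weight-$2$ newform; carrying this out uniformly in $n$ and $p$ (or even for each individual small prime $n$ in the spirit of Kraus, Darmon--Merel and Bennett) is precisely what prevents the elementary reduction above from upgrading Conjecture $3$ from a theorem at $n=3$ to a theorem for all odd primes $n$.
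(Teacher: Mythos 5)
The statement you tackled is one of the paper's conjectures, not a theorem, and the paper itself establishes only two fragments of it: the case $n=3$ (by appeal to its Theorem 2) and, conditionally on the abc-conjecture, the case of a fixed prime $p$ and all sufficiently large $n$. Judged against that, your proposal is correct exactly where a proof is currently possible, and your diagnosis of why the method stalls at $n>3$ is the right one. For $n=3$ your argument is in substance the paper's: Theorem 2 divides by $d=\gcd(A,B,C)$, uses $d^3\mid D^n$ and $3\mid n$ to conclude $d\mid D^{n/3}$, so the pairwise coprime quotients multiply to a perfect cube, hence are cubes, and FLT for exponent $3$ finishes; your valuation case analysis (either $p\nmid\alpha\beta\gamma$, which forces $n\mid 3$, or $s=3t$ with $p^t$ absorbed into a cube) lands on the same cubes with slightly more bookkeeping, and it is correct. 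Where you genuinely differ is in the treatment of $n>3$. Your reduction to the single twisted equation $p^{n-3}u^n+v^n=w^n$ with $p\nmid uvw$ is a clean and correct equivalence that the paper never states: the case where the $p$-divisible term is the sum $\gamma$ gives $u^n+v^n=p^{n-3}w^n$, which is the same family after a sign flip since $n$ is odd, and conversely any solution of the twisted equation yields $A=p^{n-2}u^n$, $B=pv^n$, $C=pw^n$, $D=puvw$ with $\gcd(A,B,C)=p$. The paper instead proves, from the abc-conjecture, that for fixed $p$ only finitely many exponents $n$ can admit solutions; you replace this with a Frey curve and level-lowering program, which you correctly flag as not carried out. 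One caution for that program: the paper's Theorem 5 shows that the twist with exponent $1$, namely $u^n+v^n=pw^n$, \emph{does} have solutions for special primes $p\equiv 1\pmod{2n}$, so any modularity argument must genuinely exploit the exponent $n-3$ attached to $k=1$ rather than attempt to rule out all twists by powers of $p$ uniformly.
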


\begin{rem}
    If $n=3$, Conjecture 3 is true by Theorem 2. If abc-conjecture is true, then Conjecture 3 should be true for fixed prime $p$ and sufficiently large $n$ where $n$ need not be prime.
\end{rem}
\begin{proof}
Because $\gcd(A,B,C)=p$, then $\gcd(\frac{A}{p},\frac{B}{p},\frac{C}{p})=1$. By $(1)$ we have
  \begin{equation*}
  \begin{cases}
      \frac{A}{p}+\frac{B}{p}=\frac{C}{p}\\
      ABC=D^n.\\
    \end{cases}
\end{equation*}
So $rad\left(\frac{ABC}{p^3}\right)= rad\left(\frac{D}{p^3}\right)\leq rad(D)$ and $C>D^{\frac{n}{3}}$. For any $n\geq7$ and $0<\epsilon<\frac{1}{3}$, we deduce
\begin{equation*}
  p\leq D=D^{\frac{7}{3}-1-\frac{1}{3}}\leq D^{\frac{7}{3}-1-\epsilon}
\end{equation*}
and
\begin{equation*}
  q\left(-\frac{A}{p},-\frac{B}{p},\frac{C}{p}\right)=\frac{\log{(\frac{C}{p})}}{\log{\left(rad(\frac{ABC}{p^3})\right)}}
  \geq\frac{\frac{n}{3}\log{D}-(\frac{7}{3}-1-\epsilon)\log{D}}{\log{D}}\geq1+\epsilon.
\end{equation*}
By abc-conjecture, there exist only finitely many triples $\left(-\frac{A}{p},-\frac{B}{p},\frac{C}{p}\right)$.
Let $A_1=\frac{A}{p},B_1=\frac{B}{p}, C_1=\frac{C}{p}$, then $\gcd(A_1,B_1,C_1)=1$, $p^3A_1B_1C_1=D^n$. Let $M$ be the greatest $m$ such that there is prime $q$ satisfying $q^m|A_1B_1C_1$. Then, if $n>M+3$, there is no solution for $p^3A_1B_1C_1=D^n$. Hence, when $\gcd(A,B,C)=p$, $(1)$ has no positive integer solutions for sufficiently large $n$.
   \end{proof}
However, we could not deduce Conjectures 1-3 from abc-conjecture.


\bigskip

\textbf{2. Preliminaries}
\begin{lemma}\cite[Proposition 6.5.6]{2}
  Let $c$ be a nonzero integer. The equation $x^4-y^4=cz^2$ has a rational solution with $xyz\neq0$ if and only if $|c|$ is a congruent number.
\end{lemma}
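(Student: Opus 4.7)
The plan is to reduce to the case $c > 0$ and then set up an explicit correspondence between nontrivial rational solutions of the quartic and rational right triangles of area $c$. First, swapping $x$ and $y$ turns any solution of $x^4 - y^4 = cz^2$ with $xyz \neq 0$ into a solution of $x^4 - y^4 = -cz^2$, so the equation is solvable for $c$ iff it is solvable for $-c$. One may therefore assume $c > 0$ and prove the equivalence ``$c$ is a congruent number iff the equation has a nontrivial rational solution''.

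For the ``congruent implies solvable'' direction, suppose $c$ is a congruent number and choose rational $a, b, h$ with $a^2 + b^2 = h^2$ and $ab = 2c$. The identity
\[
(a+b)^4 - (a-b)^4 \;=\; 8ab(a^2 + b^2) \;=\; 16 c h^2 \;=\; c(4h)^2
\]
then exhibits $(x, y, z) = (a+b,\, a-b,\, 4h)$ as a rational solution; the nondegeneracy $xyz \neq 0$ follows from $a, b > 0$ together with the observation that $a = b$ would force $h = a\sqrt{2} \notin \mathbb{Q}$.

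For the converse I propose the explicit rational triangle
\[
a \;=\; \frac{cz}{xy}, \qquad b \;=\; \frac{2xy}{z}, \qquad h \;=\; \frac{x^4 + y^4}{xyz}.
\]
A direct computation gives $ab = 2c$, and after clearing denominators the Pythagorean condition $a^2 + b^2 = h^2$ reduces to $c^2 z^4 + 4 x^4 y^4 = (x^4 + y^4)^2$, which is immediate from squaring the hypothesis $cz^2 = x^4 - y^4$. Hence the triangle with legs $|a|, |b|$ and hypotenuse $|h|$ has rational sides and area $c$, so $c$ is a congruent number.

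The main obstacle is guessing the formulas for $a, b, h$ in the converse direction; these can be motivated by mapping the solution to the congruent-number elliptic curve $E_c : Y^2 = X^3 - c^2 X$ via $(X, Y) = (cx^2/y^2,\, c^2 xz/y^3)$, which is readily checked to satisfy the Weierstrass equation and to be a non-two-torsion rational point (since $xyz \neq 0$ forces $X \neq 0, \pm c$), and then applying the classical formula that recovers a rational right triangle of area $c$ from such a point.
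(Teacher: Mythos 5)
Your proof is correct, but note that the paper does not actually prove this lemma: it is imported verbatim from Cohen \cite[Proposition 6.5.6]{2}, so your argument is a self-contained replacement for a textbook citation rather than a variant of an argument appearing in the paper. The key steps all check out: swapping $x$ and $y$ replaces $c$ by $-c$, so one may assume $c>0$; if $a,b,h$ are the sides of a rational right triangle of area $c$ (so $ab=2c$, $a^2+b^2=h^2$), the identity
\[
(a+b)^4-(a-b)^4=8ab(a^2+b^2)=16ch^2=c(4h)^2
\]
produces the solution $(a+b,\,a-b,\,4h)$, nontrivial because $a=b$ would force $h=a\sqrt{2}\notin\mathbb{Q}$; conversely, given a solution with $xyz\neq0$, your triple $a=cz/(xy)$, $b=2xy/z$, $h=(x^4+y^4)/(xyz)$ satisfies $ab=2c$, and $a^2+b^2=h^2$ reduces, after clearing $x^2y^2z^2$, to $(x^4-y^4)^2+4x^4y^4=(x^4+y^4)^2$, which is the identity $(u-v)^2+4uv=(u+v)^2$ with $u=x^4$, $v=y^4$ combined with $cz^2=x^4-y^4$; thus $|a|,|b|,|h|$ are the sides of a rational right triangle of area $c$, and all three are nonzero since $c,x,y,z$ are. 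The closing paragraph about the curve $Y^2=X^3-c^2X$ is pure motivation and carries no logical weight, which is fine. What your approach buys is that Lemma 1 \mydash and hence the cases $r=1,3$ of Theorem 3, which rest on it together with Tunnell's Lemma 2 \mydash no longer depends on an external reference, at essentially no cost, since the correspondence you set up is entirely elementary. (One cosmetic remark: the paper states the lemma for integer $c$, while your argument works verbatim for any nonzero rational $c$, with ``congruent number'' read in the rational sense; this is harmless and in fact slightly more general.)
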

\begin{lemma}\cite[Proposition 5]{13}
  Let $p$ be a prime congruent to 3 modulo 8, then p is not a congruent number.
\end{lemma}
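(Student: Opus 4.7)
The plan is to translate the claim into a rank statement about an elliptic curve and then perform a $2$-descent. Recall that $p$ is a congruent number if and only if the elliptic curve $E_p : y^2 = x^3 - p^2 x$ has a rational point of infinite order; since $E_p(\mathbb{Q})_{\mathrm{tors}} = E_p[2] \cong (\mathbb{Z}/2)^2$ (classical, for any square-free $n$), this amounts to $\mathrm{rank}\, E_p(\mathbb{Q}) > 0$. It therefore suffices to show $\mathrm{rank}\, E_p(\mathbb{Q}) = 0$ whenever $p \equiv 3 \pmod 8$ is prime.

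Because all of $E_p[2]$ is rational, complete $2$-descent is available: the map
\[ \delta : E_p(\mathbb{Q})/2E_p(\mathbb{Q}) \longrightarrow \mathbb{Q}^*/(\mathbb{Q}^*)^2 \times \mathbb{Q}^*/(\mathbb{Q}^*)^2, \qquad (x,y) \mapsto (x,\, x-p), \]
extended appropriately at $E_p[2]$, is injective. From $\gcd(x, x\pm p) \mid p$ and $\gcd(x-p, x+p) \mid 2p$ one sees that every class in the image is represented by a pair $(b_1, b_2)$ with $b_1, b_2 \in \{\pm 1,\, \pm 2,\, \pm p,\, \pm 2p\}$, giving at most $64$ candidate pairs. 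Exactly four of these — namely $\{(1,1),\, (-1,-p),\, (p,2),\, (-p,-2p)\}$ — arise from $E_p[2]$; the remainder partition into fifteen nontrivial cosets modulo the torsion image. Each nontrivial candidate $(b_1, b_2)$ corresponds to a genus-one homogeneous space cut out in $\mathbb{P}^3$ by the pair of conics
\[ b_1 U^2 - b_2 V^2 = p Z^2, \qquad b_1 b_2 W^2 - b_1 U^2 = p Z^2, \]
and the rank is zero precisely when none of these spaces has a rational point.

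The arithmetic that kills the fifteen nontrivial classes comes from $p \equiv 3 \pmod 8$: this yields $\bigl(\tfrac{-1}{p}\bigr) = -1$, $\bigl(\tfrac{2}{p}\bigr) = -1$, and hence $\bigl(\tfrac{-2}{p}\bigr) = +1$, while fixing $p \bmod 8$ for the right-hand sides of the descent equations. I would organize the case analysis by the $2$-part of $b_1$ and $b_2$: when $b_1, b_2$ involve only odd primes, a mod-$p$ obstruction applied to $b_1 U^2 \equiv b_2 V^2 \pmod p$ via the Legendre symbols above rules the candidate out; when the prime $2$ enters, a $\mathbb{Q}_2$-analysis modulo $8$ of $b_1 U^2 - b_2 V^2 \equiv p Z^2 \pmod 8$ does the job. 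I expect the main obstacle to be precisely this bookkeeping: each individual elimination is a short Legendre-symbol or $2$-adic check, but arranging the fifteen cosets uniformly — and handling the smallest prime $p = 3$ carefully, where some candidates are obstructed at only one of the two bad places — is where the real work lies. Once every nontrivial candidate is locally obstructed, $\delta$ factors through $E_p[2]$, so $\mathrm{rank}\, E_p(\mathbb{Q}) = 0$ and $p$ is not a congruent number.
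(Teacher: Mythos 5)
Your proposal is mathematically sound, but it is worth pointing out that the paper contains no proof of this lemma at all: the statement is simply quoted from Tunnell \cite[Proposition 5]{13}, where it is obtained from the machinery of modular forms of weight $3/2$ (the Shimura correspondence and Waldspurger's formula) together with the Coates--Wiles theorem, i.e.\ from the nonvanishing of $L(E_p,1)$ forcing rank zero. Your route is the classical, elementary one going back to Genocchi: complete $2$-descent on $E_p\colon y^2=x(x-p)(x+p)$. Your setup is correct in all the places where such arguments usually go wrong \mydash the torsion image $\{(1,1),(-1,-p),(p,2),(-p,-2p)\}$ is computed correctly, the count of fifteen nontrivial cosets is right, the homogeneous spaces have the right form, and the residue facts $\bigl(\tfrac{-1}{p}\bigr)=\bigl(\tfrac{2}{p}\bigr)=-1$, $\bigl(\tfrac{-2}{p}\bigr)=+1$ for $p\equiv 3\pmod 8$ are exactly the needed inputs. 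The claim that every nontrivial class is locally obstructed is true for $p\equiv 3\pmod 8$ (it is precisely the statement that the $2$-Selmer group reduces to the torsion image, so no interference from the Tate--Shafarevich group can occur), hence the plan does close; what you leave unexecuted, the fifteen eliminations, is where essentially all of the work sits, and you should add the archimedean place to your list of obstructing places, since candidates with mixed signs such as $(b_1,b_2)=(1,-1)$ or $(-1,2)$ are most naturally excluded by sign considerations rather than by congruences. Comparing the two routes: your descent is unconditional, self-contained and elementary, and could be inlined to make the paper independent of \cite{13}; the citation to Tunnell buys brevity and embeds the fact in a far more general framework (an unconditional non-congruence test for arbitrary squarefree $n$, and a complete criterion under BSD), at the cost of deep analytic machinery. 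If you do write your argument out, consider descending via the $2$-isogeny $E_p\to E_p'$ instead of the full $2$-descent; it roughly halves the number of homogeneous spaces to eliminate.
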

\begin{lemma}\cite{5}
  Let $A>2$ be a positive integer and has no prime divisors of the form $10k+1$, then the equation
  \begin{equation}
    \begin{cases}
      x^5+y^5=Az^5\\
      \gcd(x,y)=1\\
    \end{cases}
  \end{equation}
   has no nonzero integer solution. If $A=2$, the solutions of $(5)$ are $(x,y,z)=\pm(1,1,1)$.
\end{lemma}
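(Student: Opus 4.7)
The plan is to exploit the cyclotomic factorization
\[
x^5 + y^5 = (x+y)(x^4 - x^3 y + x^2 y^2 - xy^3 + y^4) = P \cdot Q,
\]
in which $Q$ is the homogenization of the tenth cyclotomic polynomial $\Phi_{10}(T) = T^4 - T^3 + T^2 - T + 1$. I would begin by showing $\gcd(P,Q) \mid 5$: reducing $Q$ modulo $P$ via the substitution $y \equiv -x$ gives $Q \equiv 5x^4$, and $\gcd(x, P) = \gcd(x,y) = 1$ then forces the gcd to divide $5$.

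The decisive arithmetic observation is that every prime $\ell \mid Q$ with $\ell \ne 5$ satisfies $\ell \equiv 1 \pmod{10}$. Such an $\ell$ is odd, and the coprimality $\gcd(x,y)=1$ makes $y$ invertible mod $\ell$, so $t := xy^{-1} \bmod \ell$ is a root of $\Phi_{10}$. Therefore $t$ has multiplicative order $10$ in $\mathbb{F}_\ell^\times$, forcing $10 \mid \ell - 1$.

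Coupling this with the hypothesis that no prime divisor of $A$ is $\equiv 1 \pmod{10}$ produces a strong structural restriction on $P,Q$: every prime of $A$ other than $5$ must lie entirely in $P$, while every prime $\equiv 1 \pmod{10}$ dividing the right side of $PQ = Az^5$ must lie entirely in $Q$. Using $\gcd(P,Q) \mid 5$ and matching $\ell$-adic valuations, one deduces a system of the form
\[
P = 5^{a} A_0 u^5, \qquad Q = 5^{b} v^5,
\]
with $A_0$ the prime-to-$5$ part of $A$ and $uv$ essentially the prime-to-$5$ part of $z$. Thus $Q$ is, up to a controlled power of $5$, a perfect fifth power. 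The closing step is an infinite descent in the cyclotomic ring $\mathbb{Z}[\zeta_5]$ (a principal ideal domain): factor $Q = \prod_{k=0}^{3}(x - \zeta_{10}^{2k+1} y)$, observe that the factors are pairwise coprime outside the unique prime above $5$, conclude via unique factorization that each is a unit times a fifth power in $\mathbb{Z}[\zeta_5]$, and combine two such relations through the Galois action to produce a smaller triple $(x', y', z')$ satisfying an equation $x'^5 + y'^5 = A' z'^5$ of the same restricted type.

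I expect the main obstacle to be controlling the unit group $\mathbb{Z}[\zeta_5]^\times = \langle -\zeta_5 \rangle \times \langle \varepsilon \rangle$ (with $\varepsilon = (1+\sqrt{5})/2$ the fundamental unit of the real subfield) during the descent: one must show, by a congruence analysis modulo $(1-\zeta_5)^2$, that after normalization the unit attached to each alleged fifth power can be absorbed into a fifth power itself. The exceptional case $A=2$ is handled by the same machinery, where the trivial solution $(x,y,z) = \pm(1,1,1)$ appears as the unique base of the descent.
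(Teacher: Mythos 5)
This lemma is not proved in the paper at all: it is quoted verbatim from Halberstadt and Kraus (reference [5]), and the history matters here. The statement is Lebesgue's conjecture of 1843, and it resisted every classical cyclotomic/descent attack for 160 years; the 2004 proof of Halberstadt and Kraus is by the modular method: to a putative primitive solution of $x^5+y^5=Az^5$ one attaches a Frey elliptic curve, invokes modularity and Ribet-type level lowering, and eliminates the resulting newforms of small level by local arguments and their symplectic criterion (this is also exactly where $A=2$ survives, since the solution $(1,1,1)$ corresponds to an honest elliptic curve and newform). So your proposal is not a variant of the actual proof; it is an attempt at the classical route, which is known not to close.

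The first half of your sketch is fine and classical: $x^5+y^5=PQ$ with $P=x+y$ and $Q$ the homogenized $\Phi_{10}$, $\gcd(P,Q)\mid 5$, every prime divisor of $Q$ other than $5$ is $\equiv 1 \pmod{10}$, hence (since no prime of $A$ is $\equiv 1\pmod{10}$) $P=5^{a}A_0u^5$ and $Q=5^{b}v^5$ with $b\in\{0,1\}$. (One slip: a prime $\equiv 1 \pmod{10}$ dividing $z$ need not divide $Q$ --- it may divide $P$ --- but the valuation argument goes through regardless.) The genuine gap is the final step, which is the entire theorem: deriving a contradiction from $Q=5^{b}v^5$. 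Your plan to ``combine two such relations through the Galois action to produce a smaller triple $(x',y',z')$ of the same restricted type'' is precisely what does not work. Kummer-style descent from $x+\zeta_5 y=\varepsilon\alpha^5$ produces equations over $\mathbb{Z}[\zeta_5]$ with unit coefficients, not rational equations $x'^5+y'^5=A'z'^5$; there is no decreasing measure, and no reason the new coefficient again avoids primes $\equiv 1\pmod{10}$. Worse, the family you would descend within contains $A=2$, which has the nontrivial solution $(1,1,1)$: a nonexistence-by-descent argument cannot run inside a family containing solvable members, and $(1,1,1)$ satisfies every congruence and unit condition the classical machinery can impose, so no such argument can separate $A>2$ from $A=2$. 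Calling $(1,1,1)$ ``the unique base of the descent'' names the obstruction rather than overcoming it.
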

This lemma was first conjectured by V. A. Lebesgue \cite{6} in 1843 and  proved by E. Halberstadt and A. Kraus \cite{5} in 2004.
\begin{lemma}
  For any prime $p$, integer $n\geq2$, if $\gcd(A,B,C)=p^k$ and $k \equiv 0~\pmod{n}$, $k\in \ZZ_{+}$, then $(1)$ has no nonzero integer solutions.
\end{lemma}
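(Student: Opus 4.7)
The plan is to reduce Lemma 4 to Fermat's Last Theorem by stripping off the common factor $p^{k}$ and showing that the three resulting coprime pieces must each be an $n$-th power.

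First I would write $k = nm$ and set $A = p^{k}A_{1}$, $B = p^{k}B_{1}$, $C = p^{k}C_{1}$, where $\gcd(A_{1},B_{1},C_{1}) = 1$. Dividing the additive relation by $p^{k}$ gives $A_{1}+B_{1}=C_{1}$, and a standard argument then upgrades $\gcd(A_{1},B_{1},C_{1})=1$ to pairwise coprimality: any prime dividing two of $A_{1},B_{1},C_{1}$ would, by the linear relation, divide the third. At this point the multiplicative relation becomes $p^{3nm}A_{1}B_{1}C_{1}=D^{n}$.

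Next I would do a prime-by-prime valuation analysis on $A_{1}B_{1}C_{1}=(D/p^{3m})^{n}$ (which, after observing $p^{3m}\mid D$, is an integral identity). For any prime $q$, pairwise coprimality of $A_{1},B_{1},C_{1}$ forces $q$ to divide at most one of them; hence the $q$-adic valuation of that single factor equals $n\cdot v_{q}(D/p^{3m})$, in particular a multiple of $n$, while the others have valuation $0$. Since this holds for every prime, $|A_{1}|,|B_{1}|,|C_{1}|$ are all perfect $n$-th powers, say $|A_{1}|=a^{n}$, $|B_{1}|=b^{n}$, $|C_{1}|=c^{n}$ with $a,b,c\in\ZZ_{+}$.

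Substituting into $A_{1}+B_{1}=C_{1}$ produces an identity of the form $\varepsilon_{1}a^{n}+\varepsilon_{2}b^{n}=\varepsilon_{3}c^{n}$ with $\varepsilon_{i}\in\{\pm 1\}$ and $abc\neq 0$ (because $A,B,C$ are nonzero). By moving terms and absorbing signs (using parity of $n$ when $n$ is even, or the identity $(-x)^{n}=-x^{n}$ when $n$ is odd), this rearranges into $X^{n}+Y^{n}=Z^{n}$ with $X,Y,Z$ nonzero integers, contradicting Fermat's Last Theorem for $n\ge 3$. The only genuinely delicate step is the valuation bookkeeping at the prime $p$, which is why the hypothesis $n\mid k$ is crucial: it ensures $3k$ is already a multiple of $n$, so the exponent of $p$ absorbed into the $A_{1}B_{1}C_{1}$ side does not disturb the divisibility by $n$ of $v_{p}$ of the unique factor among $A_{1},B_{1},C_{1}$ that $p$ can divide. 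Apart from this, the argument is purely a descent-to-FLT, with no analytic or arithmetic-geometric input required.
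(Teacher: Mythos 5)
Your proposal is correct and takes essentially the same route as the paper's own proof: strip off $p^{k}$, use pairwise coprimality of $A_{1},B_{1},C_{1}$ together with $n\mid 3k$ to conclude that each of them is an $n$-th power, and then invoke Fermat's Last Theorem. The only difference is that you spell out the valuation bookkeeping and the sign handling for possibly negative solutions, details the paper leaves implicit in its one-line step ``so $A_{1}=x^{n}$, $B_{1}=y^{n}$, $C_{1}=z^{n}$.''
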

\begin{proof}
  Let $A_{1}=\frac{A}{p^k}$, $B_{1}=\frac{B}{p^k}$, $C_{1}=\frac{C}{p^k}$, in view of $\gcd(A,B,C)=p^k$ and $(1)$, we obtain that $A_{1},B_{1},C_{1}$ are pairwise coprime and $(1)$ can be changed into
  \begin{equation*}
    \begin{cases}
      A_{1}+B_{1}=C_{1}\\
      p^{3k}A_1B_1C_1=D^n.
    \end{cases}
  \end{equation*}
  But $k \equiv 0~\pmod{n}$, $A_{1},B_{1},C_{1}$ are pairwise coprime, so $A_{1}=x^n$, $B_{1}=y^n$, $C_{1}=z^n$ and $x^n+y^n=z^n$. By Fermat's Last Theorem, we deduce that $xyz=0$, so $ABC=0$. Contradiction.
\end{proof}
\bigskip

\textbf{3. Proofs of the Theorems}
\begin{proof}[\textbf{Proof of Theorem 1} \\]
For every positive integer $n$, let $n=km+r$ where $0\leq r\leq k-1$.\\
If $r=0$, when $n>2k^{2k}$, we have $m=\frac{n}{k}>2k^{2k-1}$ and
\begin{equation*}
  n=km=\underbrace{(m-2k^{2k-1})+\cdots+(m-2k^{2k-1})}_{k}+k^{2k}+k^{2k}.
\end{equation*}
If $0<r \leq k-1$, when $n>k^{2k-1}+k$, we have $m=\frac{n-r}{k}>\frac{k^{2k-1}}{k}>k^{k-1}r^{k-1}$ and
\begin{equation*}
  n=km+r=\underbrace{(m-k^{k-1}r^{k-1})+\cdots+(m-k^{k-1}r^{k-1})}_{k}+k^{k}r^{k-1}+r.
\end{equation*}
\end{proof}
\begin{proof}[\textbf{Proof of Theorem 2} \\]
If $n \not\equiv 0~\pmod3 $, then there exists $k\in\ZZ_{+}$ such that $3k+2 \equiv 0~\pmod{n}$. It is well known that there exist infinitely many $(a,b,c)\in\ZZ_{+}^{3}$ such that $a^2+b^2=c^2$. Let
\begin{equation*}
  \begin{cases}
    A=a^{k+2}b^kc^k\\
    B=a^kb^{k+2}c^k\\
    C=a^kb^kc^{k+2}.
  \end{cases}
\end{equation*}
So we have infinitely many positive solutions $(A,B,C)$ satisfying $(1)$, where $D=(abc)^{\frac{3k+2}{n}}$. \\
If $n \equiv 0~\pmod{3}$, suppose $(A,B,C)$ is a positive solution of $(1)$. Let $d=\gcd(A,B,C)$. Then $\frac{A}{d},\frac{B}{d}$ and $\frac{C}{d}$ are pairwise coprime and $d^3|D^n$. But $3|n$, so we have $d|D^{\frac{n}{3}}$ and
\begin{equation*}
  \begin{cases}
    \frac{A}{d}+\frac{B}{d}=\frac{C}{d}\\
    \frac{A}{d}\cdot\frac{B}{d}\cdot\frac{C}{d}=(\frac{D^{\frac{n}{3}}}{d})^3.\\
  \end{cases}
\end{equation*}
Since $\frac{A}{d},\frac{B}{d}$ and $\frac{C}{d}$ are pairwise coprime, so $\frac{A}{d}=x^3,\frac{B}{d}=y^3,\frac{C}{d}=z^3$ and $x^3+y^3=z^3$. By Fermat's Last Theorem, we deduce that $xyz=0$ and  $ABC=0$. Contradiction.
\end{proof}
\begin{proof}[\textbf{Proof of Theorem 3} \\]
In view of Lemma 4, we only need to discuss $k\not\equiv0~\pmod4 $. Suppose odd prime $p \equiv 3~\pmod8$ and $(3)$ has a positive integer solution $(A,B,C)$. In view of $\gcd(A,B,C)=p^k$, $(3)$ can be changed into
  \begin{equation}
    \begin{cases}
      A_{1}+B_{1}=C_{1}\\
      p^{3k}A_1B_1C_1=D^4.
    \end{cases}
  \end{equation}
where $A_{1}=\frac{A}{p^k}$, $B_{1}=\frac{B}{p^k}$ and  $C_{1}=\frac{C}{p^k}$ are pairwise coprime. But $k\not\equiv0~\pmod4$, we deduce that only one of $A_1,B_1,C_1$ is congruent to $0$ modulo $p$. Let $3k\equiv r~\pmod4$, then $1\leq r\leq 3$. In view of (6) and reorder $A_1,B_1$ if necessary, we obtain
\begin{equation}
  \begin{cases}
      A_1=x^4\\
      B_1=y^4,\\
      C_1=p^{4-r}z^4,\\
    \end{cases}
\end{equation}
or
\begin{equation}
    \begin{cases}
      A_1=p^{4-r}z^4,\\
      B_1=y^4,\\
      C_1=x^4,\\
    \end{cases}
  \end{equation}
where $x,y,pz$ are pairwise coprime. \\
If $A_1, B_1$ and $C_1$ satisfy $(7)$, then
\begin{equation}
  x^4+y^4=p^{4-r}z^4.
\end{equation}
But $\gcd(y,p)=1$, we have integer $s\not\equiv0~\pmod{p}$ such that $sy\equiv1~\pmod{p}$. By $(9)$ we deduce that
\begin{equation*}
  (xs)^4\equiv~-1~\pmod{p},
\end{equation*}
which implies that  $-1$ is a square modulo $p$,  this can only hold for $p=2$  and $p \equiv 1 \pmod{4}$, in contradiction with $p\equiv 3~\pmod8$.\\
If $A_1, B_1$ and $C_1$ satisfy $(8)$, then
\begin{equation}
  x^4-y^4=p^{4-r}z^4.
\end{equation}
When $r=2$, we have $x^4-y^4=(pz^2)^2$, but it is well known that the equation $X^4-Y^4=Z^2$ has no nonzero integer solutions. So $r=1$ or $r=3$ and  $(10)$ can be changed into
\begin{equation}
  x^4-y^4=p(pz^2)^2
\end{equation}
or
\begin{equation}
  x^4-y^4=p(z^2)^2
\end{equation}
respectively. By Lemma 1 and Lemma 2, we obtain that $(11)$ and $(12)$ have no positive integer solutions when $p\equiv 3~\pmod8$.
\end{proof}
\begin{proof}[\textbf{Proof of Theorem 4} \\]
In view of Lemma 4, we only need to discuss $k\not\equiv0~\pmod5 $. In view of $\gcd(A,B,C)=p^k$, $(4)$ can be changed into
  \begin{equation}
    \begin{cases}
      A_{1}+B_{1}=C_{1}\\
      p^{3k}A_1B_1C_1=D^5.
    \end{cases}
  \end{equation}
where $A_{1}=\frac{A}{p^k}$, $B_{1}=\frac{B}{p^k}$ and $C_{1}=\frac{C}{p^k}$ are pairwise coprime. But $k\not\equiv0~\pmod5$, we deduce that only one of $A_1,B_1,C_1$ congruent to $0$ modulo $p$. Let $3k\equiv r~\pmod5$, then $1\leq r\leq 4$. In view of (13) and reorder $A_1,B_1,C_1$ if necessary, we obtain
\begin{equation}
  \begin{cases}
      A_1=x^5\\
      B_1=y^5,\\
      C_1=p^{5-r}z^5,\\
    \end{cases}
\end{equation}
where $x,y$ and $pz$ are pairwise coprime.
From $(13)$ and $(14)$ we deduce that
\begin{equation}
  x^5+y^5=p^{5-r}z^5.
\end{equation}
But $p$ is an odd prime and $p \not\equiv 1~\pmod{10}$, by Lemma 3, $(15)$ has no positive integer solutions.
\end{proof}
\begin{proof}[\textbf{Proof of Theorem 5} \\]
First of all, we prove that $f(x,y)=\frac{x^k+y^k}{x+y}>0$ for any odd positive integer $k$, where $(x,y) \in \mathbb{R}^2$, $x+y\neq0$. It is obvious that $f(x,y)>0$ when $xy\geq0$. So we only need to prove the case $xy<0$. If $k=1$, $f(x,y)=\frac{x+y}{x+y}=1>0$. Suppose $\frac{x^k+y^k}{x+y}>0$ where $x+y\neq0$. Because $xy<0$ and $x+y\neq0$, we obtain
\begin{equation*}
  \frac{x^{k+2}+y^{k+2}}{x+y}=\frac{x^k+y^k}{x+y}(-xy)+x^{k+1}+y^{k+1}>0.
\end{equation*}
By mathematical induction, we deduce that $f(x,y)=\frac{x^k+y^k}{x+y}>0$ for any odd positive integer $k$, where $(x,y) \in \mathbb{R}^2$, $x+y\neq0$.
By condition of Theorem 5, we have a prime $p$ such that
\begin{equation}
  0<p=\frac{a^n+b^n}{a+b}=a^{n-1}-a^{n-2}b+a^{n-3}b^2-\cdots+b^{n-1}=\sum_{j=0}^{n-1}a^{n-1-j}(-b)^j.
\end{equation}
Since $n\geq3$, we deduce from $(16)$ that
\begin{equation}
  \gcd(p,a)=\gcd(p,b)=\gcd(a,b)=1.
\end{equation}
So
\begin{equation}
  \gcd(a,a+b)=\gcd(b,a+b)=1.
\end{equation}
Let
\begin{equation}
  \begin{cases}
    A=p^{\frac{rn-1}{3}+tn}a^n\\
    B=p^{\frac{rn-1}{3}+tn}b^n\\
    C=p^{\frac{rn+2}{3}+tn}(a+b)
  \end{cases}
\end{equation}
where $r$ is defined in the condition of Theorem 5 and integer $t\geq0$.\\
By $(17)$, $(18)$, $(19)$ and $a+b=m^n$, we obtain that $A,B$ and $C$ satisfy $(1)$ where $D=p^{r+3t}abm$ and
\begin{equation*}
  \gcd(A,B,C)=p^{k},
\end{equation*}
where $k=\frac{rn-1}{3}+tn\equiv\frac{rn-1}{3}~\pmod{n}$.\\
Finally, we want to prove that $p\equiv1~\pmod{2n}$. We only need to prove that $p\equiv1~\pmod{n}$ because both $p$ and $n$ are odd primes.\\
Since $p=\frac{a^n+b^n}{a+b}$, $n\geq3$ is prime, by Fermat's little Theorem, we have
\begin{equation*}
  p(a+b)=a^n+b^n\equiv a+b~\pmod{n}.
\end{equation*}
Now we only need to prove that $a+b\not\equiv0~\pmod{n}$. Suppose $a+b\equiv0~\pmod{n}$, then $-b\equiv a~\pmod{n}$ and
\begin{equation*}
  p=\frac{a^n+b^n}{a+b}=a^{n-1}-a^{n-2}b+a^{n-3}b^2-\cdots+b^{n-1}\equiv na^{n-1}\equiv0~\pmod{n}.
\end{equation*}
But $p$ and $n$ are two primes, so we deduce that $p=n$.\\
If $ab<0$, We may assume that $a>0$, $b<0$, then
\begin{equation*}
  p=\frac{a^n+b^n}{a+b}=a^{n-1}-a^{n-2}b+a^{n-3}b^2-\cdots+b^{n-1}>\underbrace{1+1+\cdots+1}_{n}=n=p.
\end{equation*}
contradiction. \\
If $ab\geq0$, we may assume that $a\geq0$, $b\geq0$, but $p=\frac{a^n+b^n}{a+b}$ is prime, so $a\neq b$ and $ab\neq0$. Reorder $a,b$ (if necessary), we may assume that $a\geq1$, $b\geq2$.\\
If $a=1$,$b\geq2$, then $p=\frac{1+b^n}{1+b}=\frac{1+b^p}{1+b}$. Let $f(b)=(b^p+1)-p(b+1)$, then
\begin{equation*}
  f'(b)=pb^{p-1}-p=p(b^{p-1}-1)>0,
\end{equation*}
so
\begin{equation*}
  f(b)\geq f(2)=(2^p+1)-3p>0
\end{equation*}
for odd prime $p$, from which we deduce that $\frac{1+b^p}{1+b}>p$. Contradiction.\\
If $a\geq2$, $b\geq2$, then $p=\frac{a^n+b^n}{a+b}=\frac{a^p+b^p}{a+b}>\frac{pa+pb}{a+b}=p$, once again a contradiction!\\
\end{proof}
Finally, we list some primes $p$ satisfying the condition of Theorem 5 when $n=5,7$.
\begin{table}[htbp]
\caption{Primes $p<10^7$ which satisfy the condition of Theorem 5 when $n=5$.}
\centering
\subtable{
\begin{tabular}{cccc}
\hline
$p$ & $a$ & $b$ & $m$ \\
\hline
$31$ & $2$ & $-1$ & $1$ \\
\hline
$211$ & $3$ & $-2$ & $1$ \\
\hline
$4651$ & $6$ & $-5$ & $1$ \\
\hline
$61051$ & $11$ & $-10$ & $1$ \\
\hline
$132661$ & $11$ & $21$ & $2$ \\
\hline
$202981$ & $9$ & $23$ & $2$ \\
\hline
$371281$ & $17$ & $-16$ & $1$ \\
\hline
$723901$ & $20$ & $-19$ & $1$ \\
\hline
\end{tabular}
}
\qquad
\subtable{
\begin{tabular}{cccc}
\hline
$1641301$ & $35$ & $-3$ & $2$ \\
\hline
$1803001$ & $25$ & $-24$ & $1$ \\
\hline
$2861461$ & $28$ & $-27$ & $1$ \\
\hline
$4329151$ & $31$ & $-30$ & $1$ \\
\hline
$4925281$ & $32$ & $-31$ & $1$ \\
\hline
$5754901$ & $45$ & $-13$ & $2$ \\
\hline
$7086451$ & $35$ & $-34$ & $1$ \\
\hline
$7944301$ & $36$ & $-35$ & $1$ \\
\hline
$8782981$ & $49$ & $-17$ & $2$ \\
\hline
\end{tabular}
}
\end{table}\\
\begin{table}[htbp]
\caption{Primes $p<10^{11}$ which satisfy the condition of Theorem 5 when $n=7$.}
\centering
\subtable{
\begin{tabular}{cccc}
\hline
$p$ & $a$ & $b$ & $m$ \\
\hline
$127$ & $2$ & $-1$ & $1$ \\
\hline
$14197$ & $4$ & $-3$ & $1$ \\
\hline
$543607$ & $7$ & $-6$ & $1$ \\
\hline
$1273609$ & $8$ & $-7$ & $1$ \\
\hline
$2685817$ & $9$ & $-8$ & $1$ \\
\hline
$5217031$ & $10$ & $-9$ & $1$ \\
\hline
$16344637$ & $12$ & $-11$ & $1$ \\
\hline
$141903217$ & $17$ & $-16$ & $1$ \\
\hline
\end{tabular}
}
\qquad
\subtable{
\begin{tabular}{cccc}
\hline
$1928294551$ & $26$ & $-25$ & $1$ \\
\hline
$8258704609$ & $33$ & $-32$ & $1$ \\
\hline
$14024867221$ & $36$ & $-35$ & $1$ \\
\hline
$22815424087$ & $39$ & $-38$ & $1$ \\
\hline
$30914273881$ & $41$ & $-40$ & $1$ \\
\hline
$77617224511$ & $59$ & $69$ & $2$ \\
\hline
$91154730577$ & $49$ & $-48$ & $1$ \\
\hline
$98201826199$ & $55$ & $73$ & $2$ \\
\hline
\end{tabular}
}
\end{table}\\

\bigskip

\newpage

\textbf{4. New Fermat equation in quadratic fields}
\smallskip

It's well-known that Fermat's equation $x^n+y^n=z^n$ has only
the trivial solutions in integers when $n\geq 3$. Therefore, it is
an interest problem that whether Fermat's equation has
non-trivial solutions in algebraic number fields. There are numerus
papers on this problem and we can refer to \cite{20,19} and the
references there. For the case $n=3$, it was solved almost
completely. In 1915, W. Burnside \cite{20} proved that in  quadratic
field  Fermat's equation $x^3+y^3=z^3$ has solutions of the form
\[\begin{cases}
\begin{split}
&x=-3+\sqrt{-3(1+4k^3)},\\
&y=-3-\sqrt{-3(1+4k^3)},\\
&z=6k,
\end{split}\end{cases}\]
where $k$ is a rational number not equal to $0$ and $-1$. While $k=0$, Fermat's equation $x^3+y^3=z^3$ has no non-trivial solutions in $\mathbb{Q}\sqrt{-3}$.

In 2013, M. Jones and J. Rouse \cite{19} gave necessary and sufficient conditions
on a square-free integer $t$ such that $x^3+y^3=z^3$ has a nontrivial
solution in  quadratic fields $\mathbb{Q}(\sqrt{t})$, under the
Birch and Swinnerton-Dyer conjecture.

Now we consider the new Fermat equation (1) for $n=3$ in quadratic
fields $\mathbb{Q}(\sqrt{t})$. Assume that the solution in quadratic
fields $\mathbb{Q}(\sqrt{t})$ has the form $a+b\sqrt{t}$ with
$ab\neq 0.$ We have the following theorem.
\begin{theorem}
For any square-free integer $t\neq0,1$ such that the elliptic curve
\[tu^2=1+4k^3\] has a nonzero rational solution $(u,k)$, then $(1)$ has
infinitely many solutions $(A,B,C,D)$ in $\mathbb{Q}(\sqrt{t})$ for
$n=3$.
\end{theorem}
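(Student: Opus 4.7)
The plan is to convert a nonzero rational point on the curve $tu^{2} = 1 + 4k^{3}$ into an explicit $\mathbb{Q}(\sqrt{t})$-solution of $(1)$ via a short algebraic identity, and then to produce infinitely many such solutions by rescaling.

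Fix $(u,k) \in \mathbb{Q}^{2}$ with $tu^{2} = 1 + 4k^{3}$, and set
$$ \nu := \frac{-1 + u\sqrt{t}}{2} \in \mathbb{Q}(\sqrt{t}). $$
The heart of the proof is the direct expansion
$$ \nu(\nu + 1) \;=\; \frac{(-1 + u\sqrt{t})(1 + u\sqrt{t})}{4} \;=\; \frac{tu^{2} - 1}{4} \;=\; \frac{(1 + 4k^{3}) - 1}{4} \;=\; k^{3}. $$
The role of the hypothesis is transparent: $tu^{2} = 1 + 4k^{3}$ is exactly the condition that places the two roots of $X^{2} + X - k^{3} = 0$ inside $\mathbb{Q}(\sqrt{t})$, by forcing the discriminant $1 + 4k^{3}$ to lie in $t\,(\mathbb{Q}^{*})^{2}$.

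With $\nu(\nu + 1) = k^{3}$ in hand, I would set, for any nonzero $\lambda \in \mathbb{Q}(\sqrt{t})$,
$$ A := \lambda \nu, \qquad B := \lambda, \qquad C := \lambda(\nu + 1), \qquad D := \lambda k. $$
Then $A + B = \lambda\nu + \lambda = \lambda(\nu + 1) = C$ and $ABC = \lambda^{3}\,\nu(\nu + 1) = (\lambda k)^{3} = D^{3}$, so $(A,B,C,D)$ solves $(1)$ with $n = 3$ in $\mathbb{Q}(\sqrt{t})$. Letting $\lambda$ run over $\mathbb{Q}^{*}$ already yields infinitely many distinct tuples; if one insists that each of $A, B, C, D$ have the form $a + b\sqrt{t}$ with $ab \neq 0$ (the normalization stated just before the theorem), one instead takes $\lambda = \mu + \rho\sqrt{t}$ with $\mu, \rho \in \mathbb{Q}^{*}$, and a short check shows that only finitely many values of the ratio $\mu/\rho$ have to be avoided (namely $\pm ut$ and $\pm 1/u$), so infinitely many admissible $\lambda$ remain.

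No step is conceptually hard; the proof is essentially a single algebraic verification. The only bookkeeping is the observation that, because $t$ is a square-free integer with $t \neq 0,1$, any rational point on $tu^{2} = 1 + 4k^{3}$ must satisfy both $u \neq 0$ (otherwise $k^{3} = -1/4$, impossible in $\mathbb{Q}$) and $k \neq 0$ (otherwise $t = 1/u^{2}$ would be a rational square, forcing $t = 1$), so $\nu \notin \mathbb{Q}$ and $\nu \notin \{0,-1\}$, which guarantees that the solutions produced are nondegenerate and genuinely lie in $\mathbb{Q}(\sqrt{t}) \setminus \mathbb{Q}$.
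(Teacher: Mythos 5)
Your proof is correct, and it takes a genuinely different route from the paper's. The paper proceeds by elimination with undetermined coefficients: it writes $A=a+b\sqrt{t}$, $B=c+d\sqrt{t}$, $D=e+f\sqrt{t}$, expands $ABC=D^3$, separates rational and irrational parts, solves the two resulting equations for $t$, imposes the ansatz $e=kc$, $f=kd$, factors the resulting condition, solves the linear factor for $d$, and normalizes $a-2ck^3=(1+4k^3)b$ to force $t=1+4k^3$, finally rescaling by $u$ to cover $tu^2=1+4k^3$; the outcome is a family parametrized by nonzero rationals $a,c$. You instead observe that $tu^2=1+4k^3$ is exactly the condition for the quadratic $X^2+X-k^3$ to split in $\mathbb{Q}(\sqrt{t})$, take the root $\nu=\frac{-1+u\sqrt{t}}{2}$ so that $\nu(\nu+1)=k^3$, and scale the single projective solution $(\nu:1:\nu+1:k)$ by arbitrary $\lambda\in\mathbb{Q}(\sqrt{t})^{*}$. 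In fact the two families coincide: the paper's solution equals $\lambda\cdot(\bar{\nu},1,\bar{\nu}+1,k)$ with $\lambda=utc-(2a+c)\sqrt{t}$ and $\bar{\nu}=\frac{-1-u\sqrt{t}}{2}$ the conjugate root, so you reach the same solutions by direct verification rather than by elimination. What the paper's computation buys is the discovery of the curve $t=1+4k^3$ from the ansatz, i.e.\ an explanation of where the hypothesis comes from; what your argument buys is brevity and rigor on the degenerate cases: you check that $u\neq0$ and $k\neq0$ are automatic, and that all four of $A,B,C,D$ can be made to have both components nonzero (the normalization $ab\neq0$ stated before the theorem) while retaining infinitely many distinct tuples --- points the paper passes over in silence.
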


\begin{proof}[\textbf{Proof of Theorem 6}]
Let $A=a+b\sqrt{t},B=c+d\sqrt{t},D=e+f\sqrt{t}$. When $n=3$, we get from $(1)$ that
\[\begin{split}
&a^2c+ac^2+2adtb+ad^2t+b^2tc+2btcd+(2acb+2acd+a^2d\\
&+bc^2+tb^2d+tbd^2)\sqrt{t}=e^3+3ef^2t+f(3e^2+f^2t)\sqrt{t}.\end{split}\]
Then
\[
\begin{cases}\begin{split}
&a^2c+ac^2+2adtb+ad^2t+b^2tc+2btcd=e^3+3ef^2t,\\
&2acb+2acd+a^2d+bc^2+tb^2d+tbd^2=f(3e^2+f^2t).
\end{split}\end{cases}\]
Solving the above two equations, we have
\[t=-\frac{a^2c+ac^2-e^3}{ad^2+b^2c+2bcd+2adb-3ef^2}=-\frac{2acb+2acd+a^2d+bc^2-3fe^2}{b^2d+bd^2-f^3}.\]
Taking $e=kc,f=kd,$ from the formula of $t$, we  have
\[(ad+2ab+cb-2ck^3d)(d^2a^2+c^2b^2+2cbda+2c^2db+2cd^2a-4c^2d^2k^3)=0.\]
Let us consider $ad+2ab+cb-2ck^3d=0,$ then
\[d=-\frac{b(c+2a)}{a-2ck^3}.\]
Hence, we get \[t=\frac{(a-2ck^3)^2}{(1+4k^3)b^2}.\] Put
$a-2ck^3=(1+4k^3)b$, then
\[t=1+4k^3.\]
Therefore, for $n=3$, (1) has solutions
\[
\begin{cases}\begin{split}
&A=(1+4k^3)a+(a-2k^3c)\sqrt{1+4k^3},\\
&B=(1+4k^3)c-(2a+c)\sqrt{1+4k^3},\\
&C=(1+4k^3)(a+c)-(a+(2k^3+1)c)\sqrt{1+4k^3},\\
&D=(1+4k^3)kc-k(2a+c)\sqrt{1+4k^3},
\end{split}\end{cases}\] where $a,c$  and $k$ are non-zero rational numbers.

Let $tu^2=1+4k^3,$ where $t$ is a square-free integer and $t\neq
0,1.$ Then for $n=3$ the new Fermat equation (1) has solutions
\[
\begin{cases}\begin{split}
&A=uta+(a-2k^3c)\sqrt{t},\\
&B=utc-(2a+c)\sqrt{t},\\
&C=ut(a+c)-(a+(2k^3+1)c)\sqrt{t},\\
&D=utkc-k(2a+c)\sqrt{t}.
\end{split}\end{cases}\]
This completes the proof of Theorem 6.
\end{proof}

As an example, taking $k=-1$, then (1) has solutions
\[
\begin{cases}\begin{split}
&A=-3a+(a+2c)\sqrt{-3},\\
&B=-3c-(2a+c)\sqrt{-3},\\
&C=-3(a+c)-(a-c)\sqrt{-3},\\
&D=3c+(2a+c)\sqrt{-3},
\end{split}\end{cases}\]
where $a,c$ are non-zero rational numbers.
\smallskip

To get Burnside's solutions for a given $t$, we need to consider the
following elliptic curve
\[tu^2=-3(1+4k^3).\]
By some calculations, we find that the elliptic curves $tu^2=1+4k^3$ and
$tu^2=-3(1+4k^3)$ have the same $j$-invariant, so they are
isomorphic and have the same rank. If they have the rank greater
than 1, then there are infinitely many rational solutions $(u,k)$
for both of these two elliptic curves. If they have the rank zero,
we can't distinguish the torsion points on them, they might have or not have non-zero rational solutions. For $-50\leq t\leq 50$ and $t$ is square-free, we find no other $t$ as the above example $t=-3$. So we may ask the
following question.

\smallskip

\textbf{Question:} \emph{Are there other $t\neq -3$ such that for $n=3$ the new Fermat
equation $(1)$ has non-trivial solutions but the Fermat equation $x^3+y^3=z^3$ hasn't?}


\bigskip
\noindent{Department of Mathematics, Zhejiang University, Hangzhou, 310027, China}\\
\textbf{Email address: txcai$@$zju.edu.cn}
\bigskip

\noindent{Department of Mathematics, Zhejiang University, Hangzhou, 310027, China}\\
\textbf{Email address: chendeyi1986$@$126.com}
\bigskip

\noindent{Department of Mathematics, Zhejiang University, Hangzhou, 310027, China}\\
\textbf{Email address: zhangyongzju@163.com}
\end{document}